\providecommand{\U}[1]{\protect\rule{.1in}{.1in}}
\newtheorem{theorem}{Theorem}
\newtheorem{corollary}[theorem]{Corollary}
\newtheorem{definition}[theorem]{Definition}
\newtheorem{proposition}[theorem]{Proposition}
\newtheorem{remark}[theorem]{Remark}
\newenvironment{proof}[1][Proof]{\noindent\textbf{#1.} }{\ \rule{0.5em}{0.5em}}
\def \con{\nabla}
\def \conv{\nabla_{\mathbf{X}_v}}
\def \Riem{\mathrm{Riem}}
\def \xu{\mathbf{X}_u}
\def \xv{\mathbf{X}_v}
\def \okap{\overline{\kappa}}
\begin{document}

\title{\textbf{Ruled surfaces in $3$-dimensional Riemannian manifolds}}

\author{\textsc{Marco Castrill\'{o}n L\'opez$^1$}
\and \textsc{M. Eugenia Rosado Mar\'{\i}a$^2$}
\and \textsc{Alberto Soria Marina}$^3$ \vspace{0.2cm} \\
$^1$ Departamento de\'Algebra, Geometr\'ia y Topolog\'ia, \\
Facultad de Matem\'aticas, Universidad Complutense de Madrid,  \\
Plaza de las Ciencias, 3, 28040 Madrid, Spain \\
mcastri@mat.ucm.es \vspace{0.2cm} \\
$^2$ Departamento de Matem\'atica Aplicada, \\
ETS de Arquitectura, Universidad Polit\'ecnica de Madrid, \\
Avenida de Juan de Herrera, 4, 28040 Madrid \\
eugenia.rosado@upm.es \vspace{0.2cm} \\
$^3$ Departamento de Matem\'atica Aplicada a las TIC, \\
ETSI de Telecomunicaci\'on, Universidad Polit\'ecnica de Madrid, \\
Avenida Complutense, 30, 28040 Madrid \\
alberto.soria@upm.es
}
\date{}
\maketitle

\begin{abstract}

In this work, ruled surfaces in $3$-dimensional Riemannian manifolds are studied. We determine the expression for the extrinsic and sectional curvature of a parametrized ruled surface, where the former one is shown to be non-positive. We also quantify the set of ruling vector fields along a given base curve which allows to define a relevant reference frame that we refer to as \emph{Sannia frame}. The fundamental theorem of existence and equivalence of Sannia-ruled surfaces in terms of a system of invariants is given. The second part of the article tackles the concept of the striction curve, which is proven to be
the set of points where the so-called \emph{Jacobi evolution function} vanishes on a ruled surface. This characterization of striction curves provides independent proof for their existence and uniqueness in space forms and disproves their existence or uniqueness in some other cases.

\end{abstract}

\bigskip

\noindent\emph{Mathematics Subject Classification 2020:\/} Primary: 53B25;
Secondary:  53B20, 53A55.

\smallskip
\smallskip
\smallskip

\noindent\emph{Keywords and phrases:}\/ Ruled surface, Riemannian manifold, Jacobi field, Sannia frame, differential invariant, striction curve.

\smallskip


\section{Introduction\label{sec0}}

 Ruled surfaces constitute a distinguished family of submanifolds in the
area of Differential Geometry with a long history and a rich collection of works in the literature. Even though their original framework in the Euclidean space is still an active field of research (just to mention one, the reader may have a look at the study of non-developable ruled surfaces in \cite{LiuYuJung2014}), ruled surfaces are also a prominent object in other Riemannian manifolds. For example, in the case of the Heisenberg group, where
parts of planes, helicoids and hyperbolic paraboloids are proven to be the only minimal ruled surfaces  (see \cite{Shinetal2013}), or the minimal (resp. maximal) ruled surfaces in the Bianchi-Cartan-Vranceanu spaces $\mathbb{E}(\kappa,\tau)$ (resp. in their Lorentzian counterparts), see
\cite{AlbujerdosSantos2021}. In \cite{DaSilvaDaSilva2022} several results for ruled surfaces in $3$-dimensional Riemannian manifolds are established, where formulas for the striction curve, the distribution parameter and the first and second fundamental forms of ruled surfaces in space forms are obtained. The authors also find a model-independent proof for the well-known fact that surfaces in $3$-dimensional space forms with vanishing (extrinsic) Gauss curvature are necessarily ruled. In addition, a necessary and sufficient condition on the ambient Riemann tensor along  any extrinsically flat surface in generic $3$-dimensional Riemannian manifolds is derived and required for the surface to be ruled. Specifically, if $Z$ and $X$ are orthogonal tangent vector fields to the surface where $Z$ is tangent to the rulings and $N$ stands for the normal vector field, such requirement is given by $\mathrm{Riem}(X,Z,Z,N)=0$.
Ruled surfaces have also been studied in Lorentzian contexts as in
\cite{DillenKuhnel1999}, where all Weingarten surfaces are characterized in the $3$-dimensional Minkowski space. Other results concerning ruled surfaces in the $3$-dimensional Minkowski space have been established in \cite{Choi1995,KimYoon2000,KimYoon2004}. Lightlike ruled hypersurfaces have also been of great utility to address the problem of some geometric inequalities as in \cite{MarsSoria2016}, where some cases of the so-called null Penrose inequality for the Bondi energy of a spacetime are proven.

All Euclidean ruled surfaces except cylinders admit a unique special curve called \emph{striction curve}, which is made up of the so-called central points of each generator \cite{PottmannWallner}. In the case of the cylinder, the uniqueness property is not fulfilled, for every curve in it is a striction one. On the other hand, striction curves can degenerate to a single point, as occurs with the cone, where it is made up exclusively of its vertex. A central point in a geodesic generator of the surface furnish a critical point to the distance from points on that generator to neighboring geodesics.
The family of generators along the striction curve is geodesically parallel along it. Striction curves are important, among other reasons, because they contain the singular points of the ruled surface
in case they exist \cite{DoCarmo2016}. To the best of our knowledge there are no results in the literature involving a method to determine the striction curve of ruled surfaces in generic Riemannian manifolds. In this work we present a new strategy to determine the existence of striction curves in such context based on a function referred to as \emph{Jacobi evolution function}, which vanishes at these type of curves in case they lie on the surface. Even though the presence of striction curves is addressed in \cite{DaSilvaDaSilva2022} for ruled surfaces in $S^3(r)$ and $\mathbb{H}^3(r)$, no results are found in the literature where either the existence or uniqueness of the striction curve is violated in such contexts. In fact, in this work, the aforementioned {\it Jacobi evolution function} is applied to prove that there exists at most one striction curve on ruled surfaces in the $3$-dimensional hyperbolic space $\mathbb{H}^3$, with examples of surfaces with no striction curve.

In case that the striction curve of a ruled surface in a $3$-dimensional Riemannian manifold has a regular striction curve, it is possible to define a specific reference frame called  \textit{Sannia} after the Italian mathematician who first proposed it (see \cite {PottmannWallner}, \cite {1925Sannia}), and which is characterized for having the generating vector of the rulings as the first element of the basis. The evolution of the  \textit{Sannia frame} provides two Euclidean invariants associated with the surface referred to as \textit{curvature} and \textit{torsion}, which together with the (striction) angle enclosed, determine a complete system of Euclidean invariants for the ruled surface.
A fundamental problem in Riemannian Geometry is that of equivalence of objects in a determined class,
namely to provide a criterion to know whether two given objects in this class are congruent under isometries or not. In \cite{Castrillonetal2015} the problem of curves with values in an arbitrary dimensional Riemannian manifold is solved with respect to the Frenet curvatures. However, spaces with non-constant curvature require additional invariants to establish such result. The classical Sannia  problem of reconstructing suitable surfaces provided a set of associated invariant
functions is known and is presented in Theorem 5.3.8 in \cite{PottmannWallner}. In this work we establish an analogous
result of local existence and uniqueness of ruled surfaces in generic $3$-dimensional Riemannian manifolds. In our main Theorem we prove that four functions of suitable regularity and the Sannia frame
at a given point $p_0\in M$ determine uniquely a (local) ruled surface
passing through $p_0$. Moreover, we do not require the base to be a striction curve. This is a great advantage since, as we shall see, certain ruled surfaces lack this sort of curves.

The paper is structured as follows. In Section \ref{Ruledsurfaces} we introduce the concept of {\it parametrized ruled surface}, which is the most practical way to present ruled surfaces for the purposes of this work. As a first result, we obtain the expression for the sectional curvature of any ruled surfaces in a $3$-dimensional Riemannian manifold in terms of the ambient geometry and its second fundamental form. We also define the {\it extended distribution parameter function} that generalizes the distribution parameter for ruled surface in the Euclidean space. At the end of the section we introduce the concept of {\it Sannia ruled surface}, which we define as the ones admitting a ruling vector field with linearly independent covariant derivative. In particular this means that a Sannia frame can be defined along the base curve. An idea of the size of the set of vectors satisfying such a property is put forward in Proposition \ref{Propjetgeneralposition}. Deriving the evolution equations for the vectors of a Sannia frame along a given base curve gives rise to a set of two invariant functions associated with the {\it Sannia ruled surface} which, together with two additional angle functions, provide the fundamental theorem of ruled surfaces (see Theorem \ref{EDOruledtheorem} below). The existence tackled in the Theorem is completed with the uniqueness only in space forms. In Section \ref{Strictioncurves} we focus our interest on the existence and uniqueness of striction curves. We define a new function on the ruled surface that we refer to as {\it Jacobi evolution function}, which vanishes on the striction curves in case of existence. This makes such a function a useful tool for proving the presence of striction curves on ruled surfaces. In the case of constant curvature $k$, we compute its expression explicitly in the three possible cases of sign of $k$. In particular, for $k<0$, the striction curve may not exist (in contrast to \cite{DaSilvaDaSilva2022}). Furthermore, some extra examples of rules surfaces in product manifolds are put forward to disprove the uniqueness of the striction curve in such backgrounds. In the text, Einstein's summation convention will be assumed.

\section{Ruled surfaces in $3$-dimensional Riemannian manifolds}
\label{Ruledsurfaces}

\subsection{Definitions and properties}

Let $(M,g)$ be a $3$-dimensional Riemannian manifold and let $\psi \colon\Sigma\rightarrow (M,g)$ be an immersed surface in $(M,g)$. We recall some basic concepts of the geometry of hypersurfaces (see for instance
\cite{KN}). We denote by $\nabla$ and $\nabla ^\Sigma$ the Levi-Civita connections of $(M,g)$ and
$(\Sigma, \psi ^* g=g_\Sigma)$ respectively. Locally, we can assume $\Sigma$ to be embedded
in $(M,g)$. Let us choose a unit normal vector $\xi$ in a neighborhood $U$ of a
point $p\in\Sigma$. Recall that the second fundamental form of $\Sigma$
is defined as
\begin{align*}
	\vec{h}\colon T_p\Sigma\times T_p\Sigma &  \rightarrow
    T^\perp _p \Sigma
\\
	(U,V) &   \mapsto (\nabla_{U}V)_{p}^{\perp}=h(U,V)\xi,
\end{align*}
where $T^\perp _p \Sigma$ is the subspace of normal vectors on $\Sigma$, and $h\colon T_{p}\Sigma\times T_{p}\Sigma\rightarrow\mathbb{R}$ is
the associated second fundamental form tensor of $\Sigma$ defined as
\[
h(U,V)=-g(\nabla_{U}\xi,V).
\]

Given any vectors fields $X,Y,Z,T\in \mathfrak{X}(M)$, we consider
the curvature tensor of $(M,g)$ as
\begin{equation*}
	R(X,Y)Z=\nabla_X \nabla_Y Z-\nabla_Y \nabla_X Z-\nabla_{[X,Y]} Z,
\end{equation*}
and the Riemann tensor is  $\mathrm{Riem}(X,Y,Z,T)=-g(R(X,Y)Z,T)$. The induced curvature tensor and Riemann tensor on $\Sigma$ will be denoted by $R^\Sigma$
and $\mathrm{Riem}^\Sigma$ respectively.
Given any $p\in M$ and two linearly independent vectors $X,Y\in T_p M$ generating a plane $T_p \Sigma\subset T_p M$, the sectional curvature $K(T_p\Sigma)$ in $(M,g)$ is defined by
\begin{equation*}
	K(T_p\Sigma)=\frac{\mathrm{Riem}(X,Y,X,Y)}{g(X,X)g(Y,Y)-g(X,Y)^2},
\end{equation*}
and the sectional curvature of $\Sigma$ will be denoted by $K^\Sigma$. The sectional curvatures $K(T_p\Sigma)$ in $(M,g)$ and $K^\Sigma _p$ in $(\Sigma,g_\Sigma)$ are related by
\begin{equation}
K^\Sigma _p = K(T_p\Sigma) + K^\Sigma _{ext},
\label{ExtCurvatures}
\end{equation}
where $K^\Sigma_{ext}$ is the extrinsic or Gauss curvature (the determinant of the second fundamental form endomorphism).

Ruled surfaces are a remarkable family of surfaces in $3$-dimen\-sional Riemannian manifolds. They are generally defined as follows:
\begin{definition}
	We say that the immersed surface $\psi\colon \Sigma\rightarrow (M,g)$ in $(M,g)$
	is ruled if there exists a foliation of $\Sigma$ by curves which are geodesics in the ambient
	space $(M,g)$.
\end{definition}

However, in the following we are going to work with a notion, closer to the classical constructions in the Euclidean space, of ruled surface defined by base curves and ruling directions.

\begin{definition}
	Let $(M,g)$ be a complete $3$-dimensional Riemannian manifold. Let $\alpha\colon I\rightarrow (M,g)$ be a smooth regular curve and $Z$ a non-vanishing smooth vector field along $\alpha$, i.e. a curve in $TM$ such that $Z(u)\in T_{\alpha(u)}M$, $\forall u\in I$. The \emph{parametrized ruled surface} defined by $\alpha$ and $Z$ is the differentiable map
			\begin{align}
			\label{rulparam}
			\mathbf{X}\colon I\times\mathbb{R}  & \rightarrow (M,g) \nonumber \\
			(u,v)  & \mapsto\gamma_{Z(u)}(v)=\exp_{\alpha(u)}(vZ(u)),
		\end{align}
	where $\gamma_{Z(u)}\colon\mathbb{R}\rightarrow (M,g)$ is the geodesic satisfying
		\begin{align*}
			\gamma_{Z(u)}(0)  & =\alpha(u),\\
			\gamma_{Z(u)}^{\prime}(0)  & =Z(u).
		\end{align*}
\end{definition}

\begin{remark}
	    Obviously, the image of the map  $\mathbf{X}$ is not necessarily
		an embedded surface in $(M,g)$. But if we require the rank of $d\mathbf{X}$ to be 2, it will be an immersed surface.
	 On the other hand, if $(M,g)$ is not complete, the definition above is still valid by restricting the domain of $\mathbf{X}$.
\end{remark}

Note that the partial derivative $\xv=\partial \mathbf{X}/\partial v$ is a geodesic vector field, that is
\[
\nabla_{\xv}\xv=0.
\]
In addition, $\xv (u,0)=Z(u)$. The other partial derivative $\xu= \partial \mathbf{X}/\partial u$ is a Jacobi vector field along the geodesic $\gamma_{Z(u)}$ since it is a geodesic variation. Hence
it satisfies the Jacobi equation
\begin{equation}
	\label{Jacobeq}
	\conv \conv \xu+R(\xu,\xv)\xv=0.
\end{equation}

In the following result we derive the expression for the (intrinsic) curvature $K_p^\Sigma$ of a ruled surface $\Sigma$ in $(M,g)$.
\begin{proposition}
Let $\Sigma$ be a parametrized ruled surface in $(M,g)$ defined by
$\mathbf{X}\colon I\times\mathbb{R}\rightarrow (M,g)$, where $d\mathbf{X}$ is of rank $2$. Then, we have
\begin{equation}
	\label{intcurvature}
	K_p^{\Sigma}=\frac{-g(\nabla_{\xv} \nabla_{\xv} \xu,\xu)}{\vert\vert \mathbf{X}_u\vert\vert^2 \vert\vert \mathbf{X}_v\vert\vert^2-g(\mathbf{X}_u,\mathbf{X}_v)^2}-\frac{\mathrm{vol}_g(\xu,\xv,\nabla_{\xu}\xv)^2}{(\vert\vert \mathbf{X}_u\vert\vert^2 \vert\vert \mathbf{X}_v\vert\vert^2-g(\mathbf{X}_u,\mathbf{X}_v)^2)^2}.
\end{equation}

\end{proposition}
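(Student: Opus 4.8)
The plan is to derive \eqref{intcurvature} directly from the relation \eqref{ExtCurvatures} between the intrinsic, ambient and extrinsic curvatures: I will show that the first summand on the right of \eqref{intcurvature} is exactly the ambient sectional curvature $K(T_p\Sigma)$, and that the second summand is exactly the extrinsic (Gauss) curvature $K^\Sigma_{ext}$. Throughout I write $D=\|\xu\|^2\|\xv\|^2-g(\xu,\xv)^2$ for the squared area of the parallelogram spanned by $\xu$ and $\xv$; since $d\mathbf{X}$ has rank $2$, $D>0$, and it is the common denominator in the two terms.

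For the ambient term I would start from the definition $K(T_p\Sigma)=\Riem(\xu,\xv,\xu,\xv)/D$ and use the paper's sign convention $\Riem(X,Y,Z,T)=-g(R(X,Y)Z,T)$ together with the antisymmetry $g(R(X,Y)Z,W)=-g(R(X,Y)W,Z)$ to rewrite $\Riem(\xu,\xv,\xu,\xv)=g(R(\xu,\xv)\xv,\xu)$. Because $\xu$ is a Jacobi field along the rulings $v\mapsto\mathbf{X}(u,v)$, the Jacobi equation \eqref{Jacobeq} gives $R(\xu,\xv)\xv=-\conv\conv\xu$, hence $\Riem(\xu,\xv,\xu,\xv)=-g(\conv\conv\xu,\xu)$, which is the first summand of \eqref{intcurvature}.

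For the extrinsic term I would fix a local unit normal $\xi$ and use that $K^\Sigma_{ext}=\det(h_{ij})/\det(g_{ij})$ in the coordinate basis $\{\xu,\xv\}$, where $h_{ij}=h(\mathbf{X}_i,\mathbf{X}_j)$, $g_{ij}=g(\mathbf{X}_i,\mathbf{X}_j)$ and $\det(g_{ij})=D$. The crucial simplification is that $\xv$ is a geodesic vector field, $\nabla_{\xv}\xv=0$, so $h_{vv}=g(\nabla_{\xv}\xv,\xi)=0$ and therefore $\det(h_{ij})=-h_{uv}^2$. It then remains to express $h_{uv}=g(\nabla_{\xu}\xv,\xi)$ through the volume form: decomposing $\nabla_{\xu}\xv$ into its $\Sigma$-tangential part (a combination of $\xu$ and $\xv$) and its normal part $h_{uv}\,\xi$, multilinearity and the alternating property of $\mathrm{vol}_g$ give $\mathrm{vol}_g(\xu,\xv,\nabla_{\xu}\xv)=h_{uv}\,\mathrm{vol}_g(\xu,\xv,\xi)$, and since $\xi$ is a unit vector orthogonal to $\xu$ and $\xv$ the Gram determinant of $\{\xu,\xv,\xi\}$ equals $D$, so $\mathrm{vol}_g(\xu,\xv,\xi)^2=D$. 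Thus $h_{uv}^2=\mathrm{vol}_g(\xu,\xv,\nabla_{\xu}\xv)^2/D$, whence $K^\Sigma_{ext}=-h_{uv}^2/D=-\mathrm{vol}_g(\xu,\xv,\nabla_{\xu}\xv)^2/D^2$, the second summand of \eqref{intcurvature}; note that both summands depend only on squares of $h_{uv}$-type quantities, so the expression is independent of the orientation of $\xi$. Adding the two contributions and invoking \eqref{ExtCurvatures} finishes the argument.

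This is essentially a bookkeeping exercise, and I do not anticipate a genuine obstacle. The two places that need the most care are the sign chase turning $\Riem(\xu,\xv,\xu,\xv)$ into $-g(\conv\conv\xu,\xu)$ (which relies on the paper's curvature sign convention and on the Jacobi equation \eqref{Jacobeq} holding with the stated signs), and correctly identifying the normal component of $\nabla_{\xu}\xv$ as $h_{uv}\,\xi$, using $h(U,V)=-g(\nabla_U\xi,V)=g(\nabla_U V,\xi)$.
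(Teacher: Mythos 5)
Your proposal is correct and follows essentially the same route as the paper: the paper invokes the Gauss equation (of which \eqref{ExtCurvatures} is the sectional-curvature form), kills the $\vec h(\xv,\xv)$ term because the rulings are geodesics, rewrites the ambient term as $-g(\nabla_{\xv}\nabla_{\xv}\xu,\xu)$ using that $\xv$ is geodesic and $[\xu,\xv]=0$ (equivalently, the Jacobi equation you cite), and identifies $|h(\xu,\xv)|=|\mathrm{vol}_g(\xu,\xv,\nabla_{\xu}\xv)|/\sqrt{D}$ exactly as you do. The only cosmetic difference is that you phrase the extrinsic term as $\det(h_{ij})/\det(g_{ij})$ while the paper keeps the $\vec h$-products inside the Gauss equation; the computations coincide.
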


\begin{proof}
Taking into account the Gauss equation (see \cite[Volume II, Chapter
	VII, Proposition 4.1]{KN}), the sectional curvature for $T_{p}\Sigma$ is
	written as follows
	\begin{align*}
		K_p^\Sigma &  =\frac{\mathrm{Riem}^\Sigma(\mathbf{X}_u,\mathbf{X}_v,\mathbf{X}_u,\mathbf{X}_v)}{\vert\vert \mathbf{X}_u\vert\vert^2 \vert\vert \mathbf{X}_v\vert\vert^2-g(\mathbf{X}_u,\mathbf{X}_v)^2}\\
		&  =\frac{\mathrm{Riem}(\mathbf{X}_u,\mathbf{X}_v,\mathbf{X}_u,\mathbf{X}_v)+g(\vec{h}(\mathbf{X}_u,\mathbf{X}_u),\vec{h}
			(\mathbf{X}_v,\mathbf{X}_v))-g(\vec{h}(\mathbf{X}_u,\mathbf{X}_v),\vec{h}(\mathbf{X}_u,\mathbf{X}_v))}{\vert\vert \mathbf{X}_u\vert\vert^2 \vert\vert \mathbf{X}_v\vert\vert^2-g(\mathbf{X}_u,\mathbf{X}_v)^2}.
			\end{align*}
Since $\mathbf{X}_v$ is geodesic and $[\mathbf{X}_u,\mathbf{X}_v]=0$, we have
	\begin{equation*}
	  K_p^\Sigma=\frac{-g(\nabla _{\mathbf{X}_v}\nabla _{\mathbf{X}_v}\mathbf{X}_u,\mathbf{X}_u)-h(\mathbf{X}_u,\mathbf{X}_v)^2}{\vert\vert \mathbf{X}_u\vert\vert^2 \vert\vert \mathbf{X}_v\vert\vert^2-g(\mathbf{X}_u,\mathbf{X}_v)^2},	
 	\label{seccurvatureformula}
   \end{equation*}
	The proof is complete by taking into account
 \[
 |h(\mathbf{X}_u,\mathbf{X}_v)|=|g(\xi ,\nabla _{\mathbf{X}_u}\mathbf{X}_v)|=\frac{|\mathrm{vol}_g(\mathbf{X}_u,\mathbf{X}_v,\nabla _{\mathbf{X}_u}\mathbf{X}_v)|}{\sqrt{\vert\vert \mathbf{X}_u\vert\vert^2 \vert\vert \mathbf{X}_v\vert\vert^2-g(\mathbf{X}_u,\mathbf{X}_v)^2}}.
 \]

\end{proof}

\begin{remark}
	
Taking (\ref{ExtCurvatures}) and (\ref{intcurvature}) into account, it follows that
\begin{eqnarray}
	K(T_{p}\Sigma)
	&=&\frac{-g(\nabla_{\xv} \nabla_{\xv} \xu,\xu)}{\vert\vert \mathbf{X}_u\vert\vert^2 \vert\vert \mathbf{X}_v\vert\vert^2-g(\mathbf{X}_u,\mathbf{X}_v)^2}, \nonumber \\
	K_{\mathrm{ext}}^{\Sigma}
	&=&-\frac{\mathrm{vol}_g(\xu,\xv,\nabla_{\xu}\xv)^2}{(\vert\vert \mathbf{X}_u\vert\vert^2 \vert\vert \mathbf{X}_v\vert\vert^2-g(\mathbf{X}_u,\mathbf{X}_v)^2)^2}. \label{curvatures}
\end{eqnarray}
Note that the extrinsic curvature $K^\Sigma_{ext}$ of a parametrized ruled surface in a generic Riemannian background is non-positive, as (\ref{curvatures}) shows.
In particular, the (intrinsic) curvature of a ruled surface is always less or equal than the ambient sectional curvature. Also notice that the extrinsic curvature relation $K_{\mathrm{ext}}^{\Sigma}$ in (\ref{curvatures}) is still valid for any other basis of $T_p\Sigma$ since it is a tensorial expression.
\end{remark}

The expression of the extrinsic curvature that we have obtained above is connected with a classical invariant in the theory of ruled surfaces, the distribution parameter $\lambda$ of ruled surfaces in the Euclidean space (see \cite{DoCarmo2016,Hicks1965,PottmannWallner,Struik} for more details).
We next define a function defined on a parametrized ruled surface
motivated by the classical concept of distribution parameter:

\begin{definition}[\bf Extended distribution parameter]
\label{Defdistribparam}	
Let $\mathbf{X}\colon I\times\mathbb{R}\rightarrow (M,g)$ be a parametrized ruled surface as in (\ref{rulparam}) in a Riemmanian $3$-manifold $(M,g)$ with $d\mathbf{X}$ of rank $2$. We define the \emph{extended distribution parameter function} of $\mathbf{X}\colon I\times\mathbb{R}\rightarrow (M,g)$ as
\begin{equation}
	    \label{distribparamfunction}
		\lambda(u,v)=\frac{\mathrm{vol}_g(\xu,\xv,\nabla_{\xu}\xv)}{\Vert\nabla_{\xu}\xv\Vert^2}.
\end{equation}	
\end{definition}
\begin{remark}
As we will illustrate in Section \ref{Strictioncurves}, for $v=0$ the above formula reduces to the classical distribution parameter $\lambda(u,0)=\lambda(u)$ provided $\alpha\colon I\rightarrow \mathbf{X}(I\times\mathbb{R})$ is a striction base curve.
\end{remark}

\begin{definition}
	    \label{anglefunction}
		Let $\mathbf{X}\colon I\times\mathbb{R}\rightarrow (M,g)$ a parametrized ruled surface in $(M,g)$ as in (\ref{rulparam}).
		The function $\sigma\colon I\rightarrow\mathbb{R}$
		satisfying
	\[
	\cos\sigma_{\alpha(u)}=\frac{g(\alpha^{\prime}(u),Z(u))}{\Vert\alpha^\prime(u)\Vert },
	\]
	is called {\rm base angle} along the curve $\alpha$.
\end{definition}

It is a well-known fact that given a one-parameter family of geodesics parame\-trized by arc-length on a Riemannian manifold the product of their velocity by the associated Jacobi vector field is constant along each one (see for instance \cite{Hicks1965}). For the sake of completeness of this work we include a proof of this result adapted to the setting we are considering, which in turn allows us to find an expression for the angle $\sigma$ between $\xu$ and $\xv$ at any point $q$ of a ruled surface in terms of the geometry of its base curve and the norm of the Jacobi field $\xu$ at $q$:

\begin{proposition}
	\label{Propcoordframeangle}
	Let $\mathbf{X}\colon I\times\mathbb{R}\rightarrow (M,g)$ be a parametrized ruled surface as in (\ref{rulparam}), being $Z$ a unit vector field. Then $g(\xu,\xv)$ is constant along its rulings. Moreover, the angle $\sigma$ between $\xu$ and $\xv$ at any point $q$ of the ruling containing $p=\alpha(u)$ is given by
	\begin{equation}
		\label{coordframeangle}
		\cos{\sigma}_q=\frac{\Vert\alpha^\prime(u)\Vert\cos{\sigma_p}}{\Vert\xu\Vert_q}.
	\end{equation}
\end{proposition}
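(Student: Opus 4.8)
The plan is to first establish that $v\mapsto g(\xu,\xv)$ is constant along each ruling, and then obtain the angle formula by evaluating that constant at $v=0$ and using that $\xv$ is a unit field.

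For the constancy I would differentiate along the ruling:
\[
\partial_v\, g(\xu,\xv)=g(\nabla_{\xv}\xu,\xv)+g(\xu,\nabla_{\xv}\xv).
\]
The last term vanishes because $\xv$ is a geodesic vector field, $\nabla_{\xv}\xv=0$, as recorded right after the definition of a parametrized ruled surface. For the first term, since $\xu$ and $\xv$ are coordinate vector fields they commute, so torsion-freeness of $\nabla$ gives $\nabla_{\xv}\xu=\nabla_{\xu}\xv$, whence $g(\nabla_{\xv}\xu,\xv)=g(\nabla_{\xu}\xv,\xv)=\tfrac12\,\partial_u g(\xv,\xv)$. Now $\Vert\xv\Vert$ is constant along each ruling (the speed of a geodesic is constant) and equals $\Vert Z(u)\Vert=1$ at $v=0$, so $g(\xv,\xv)\equiv 1$ on the whole surface; hence $\partial_u g(\xv,\xv)=0$ and therefore $\partial_v\, g(\xu,\xv)=0$. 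Equivalently, one may invoke the Jacobi equation \eqref{Jacobeq}, which makes $g(\xu,\xv)$ an \emph{affine} function of $v$, together with the observation that its linear coefficient equals $\tfrac12\,\partial_u\Vert\xv\Vert^2\big|_{v=0}=0$.

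To conclude, I would evaluate the now-constant quantity at $v=0$, where $\xu(u,0)=\alpha'(u)$ and $\xv(u,0)=Z(u)$: thus $g(\xu,\xv)(u,v)=g(\alpha'(u),Z(u))=\Vert\alpha'(u)\Vert\cos\sigma_p$ for every $v$, by Definition \ref{anglefunction}. At an arbitrary point $q$ of the ruling through $p=\alpha(u)$, both $\xu$ and $\xv$ are nonzero there since $d\mathbf{X}$ has rank $2$, so the angle between them satisfies $\cos\sigma_q=g(\xu,\xv)_q/(\Vert\xu\Vert_q\Vert\xv\Vert_q)$; using $\Vert\xv\Vert_q=1$ and the value just computed yields $\cos\sigma_q=\Vert\alpha'(u)\Vert\cos\sigma_p/\Vert\xu\Vert_q$, which is precisely \eqref{coordframeangle}.

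There is no serious obstacle here; the only point that must be handled with care is the constancy — and unit value — of $\Vert\xv\Vert$ along rulings, which is used both to kill $\partial_u g(\xv,\xv)$ and to simplify the denominator of the angle formula, and one should be sure that $q$ actually lies on the ruling through $p$, i.e. is of the form $\mathbf{X}(u,v)$ with the same $u$, so that "constant along the ruling" applies to it.
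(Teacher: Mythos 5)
Your proof is correct and follows essentially the same route as the paper: differentiate $g(\xu,\xv)$ along $\xv$, use that $\xv$ is geodesic and $[\xu,\xv]=0$, and evaluate the resulting constant at $v=0$ to get the angle formula. The only difference is that you spell out explicitly why $g(\nabla_{\xu}\xv,\xv)=\tfrac12\,\partial_u g(\xv,\xv)=0$ (constancy of the unit geodesic speed), a detail the paper leaves implicit.
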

\begin{proof}
	Differentiating the function $g(\xu,\xv)$ along the vector field $\xv$ gives
	\begin{equation*}
		\xv(g(\xu,\xv))=g(\conv \xu,\xv)+g(\xu,\conv \xv)=g(\nabla_{\xu} \xv,\xv)=0,
	\end{equation*}
	where we have taken into account that $[\xu,\xv]=0$ and $\xv$ is geodesic. This means that $g(\xu,\xv)=\Vert\xu\Vert\cos{\sigma}$ is constant along the rulings.
	This value can be obtained by evaluating it at the point $p=\alpha(u)$. Indeed,
	given any $q$ of the ruling at $p=\alpha(u)$,
	\begin{equation*}
		g(\xu,\xv)|_q=\Vert\xu\Vert_q\cos{\sigma_q}=\Vert\xu\Vert_p\cos{\sigma}_p=\Vert\alpha^\prime(u)\Vert\cos{\sigma_p},
	\end{equation*}
	from where relation (\ref{coordframeangle}) follows.
	\end{proof}

\begin{remark}
		Relation (\ref{coordframeangle}) shows that whenever the vectors  $\xu(u,0)=\alpha^\prime(u)$ and $Z_p=\xv(u,0)$ are orthogonal at the base curve, they remain orthogonal along the ruling $\gamma_{Z(u)}(v)=\mathbf{X}(u,v)$. Nevertheless, the coordinate basis $(\xu,\xv)$ associated to the parametrization (\ref{rulparam}) with $d\mathbf{X}$ of rank $2$ is not necessarily orthogonal.
\end{remark}

As already mentioned, $\xu$ is a Jacobi vector field on every parametrized ruled surface, not necessarily orthogonal to its rulings. However, it is sometimes useful  to consider the orthogonal component to the surface generators, which turns out to be a Jacobi field too. In the following Proposition we compute the decomposition of $\xu$ into its tangent and normal components to the rulings.

\begin{proposition}
	\label{Propjacdecomposition}
	Let $\mathbf{X}\colon I\times\mathbb{R}\rightarrow (M,g)$ be a parametrized ruled surface in $(M,g)$ as in (\ref{rulparam}), being $Z$ a unit vector field. Then the decomposition of the Jacobi field $\xu$ into its tangential and normal part with respect to the ruling is
	\begin{equation*}
		\label{Jacdecomposition}
		\xu=\Vert\alpha^\prime(u)\Vert(\cos{\sigma_p})\xv+\xu^{\bot},
	\end{equation*}
	where $\xu^{\bot}$ is a Jacobi field along the ruling $\gamma_{Z(u)}$ and orthogonal to it, and $\sigma_p$ is the
	base angle at $p=\alpha(u)$.
\end{proposition}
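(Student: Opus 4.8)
The plan is to define $\xu^{\bot}$ as the difference between $\xu$ and its orthogonal projection onto the unit geodesic field $\xv$, and then to check in turn the three assertions: that the tangential part has the stated coefficient, that $\xu^{\bot}$ is orthogonal to the ruling, and that $\xu^{\bot}$ again satisfies the Jacobi equation.

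First I would record that, since $Z$ is a unit vector field and $\xv$ is geodesic, $\xv(\Vert\xv\Vert^2)=2g(\conv\xv,\xv)=0$, so $\Vert\xv\Vert\equiv 1$ along every ruling $\gamma_{Z(u)}$. Consequently the orthogonal projection of $\xu$ onto $\xv$ along the ruling through $p=\alpha(u)$ is exactly $g(\xu,\xv)\,\xv$. By Proposition \ref{Propcoordframeangle}, $g(\xu,\xv)$ is constant along that ruling and equals $\Vert\alpha^\prime(u)\Vert\cos\sigma_p$; set $c:=\Vert\alpha^\prime(u)\Vert\cos\sigma_p$ and define
\[
\xu^{\bot} := \xu - c\,\xv .
\]
Then $g(\xu^{\bot},\xv)=g(\xu,\xv)-c\Vert\xv\Vert^2=0$, so $\xu^{\bot}$ is orthogonal to the ruling, and the asserted decomposition $\xu=\Vert\alpha^\prime(u)\Vert(\cos\sigma_p)\xv+\xu^{\bot}$ holds by construction.

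It then remains to verify that $\xu^{\bot}$ is a Jacobi field along $\gamma_{Z(u)}$. Since $c$ does not depend on $v$ and $\xv$ is geodesic, $\conv\conv(c\,\xv)=c\,\conv\conv\xv=0$ and $R(c\,\xv,\xv)\xv=0$, so $c\,\xv$ solves the Jacobi equation (\ref{Jacobeq}) along the ruling (it is the trivial tangential Jacobi field). Because $\xu$ satisfies (\ref{Jacobeq}) and the Jacobi operator $J\mapsto\conv\conv J+R(J,\xv)\xv$ is $\mathbb{R}$-linear, the difference $\xu^{\bot}=\xu-c\,\xv$ is again a solution, i.e. a Jacobi field along $\gamma_{Z(u)}$, and it is orthogonal to it by the previous paragraph.

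I expect no genuine obstacle here: the only point requiring a little care is that $g(\xu,\xv)$ carries no term linear in $v$ — which would otherwise keep the tangential part from being a constant multiple of $\xv$ — and this is precisely the content of Proposition \ref{Propcoordframeangle}; everything else follows from $\Vert\xv\Vert\equiv1$ together with the linearity of the Jacobi equation.
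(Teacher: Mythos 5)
Your proposal is correct. It differs from the paper's argument in a mild but genuine way: the paper quotes the standard structural fact that along an arc-length geodesic every Jacobi field splits as $J(v)=(a+bv)\gamma'(v)+J^{\bot}(v)$ with $J^{\bot}$ an orthogonal Jacobi field, and then only computes the two coefficients from the initial data, finding $a=g(\alpha'(u),Z_p)=\Vert\alpha'(u)\Vert\cos\sigma_p$ and $b=g((\nabla_{\xv}\xu)|_p,Z_p)=\tfrac12\,\xu\bigl(g(\xv,\xv)\bigr)|_p=0$ because $Z$ is unit. You instead bypass that general lemma: you project $\xu$ onto the unit geodesic field $\xv$, invoke Proposition \ref{Propcoordframeangle} to know that the coefficient $g(\xu,\xv)=\Vert\alpha'(u)\Vert\cos\sigma_p$ is constant along each ruling, and then check by linearity of the Jacobi operator that $c\,\xv$ (with $c$ independent of $v$ and $\xv$ geodesic) and hence $\xu-c\,\xv$ satisfy the Jacobi equation, with orthogonality holding by construction. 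The mathematical core is the same --- the vanishing of the paper's coefficient $b$ is precisely the statement that $g(\xu,\xv)$ carries no term linear in $v$, which is what Proposition \ref{Propcoordframeangle} gives you --- but your version is self-contained, re-deriving the needed special case of the splitting rather than citing it, whereas the paper's version makes visible the general $a+bv$ structure of the tangential part and why the hypothesis that $Z$ is unit is exactly what kills the linear term.
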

\begin{proof}
	It is a well known fact that any Jacobi vector field along a geodesic curve $\gamma(v)$
parametrized by its arc length decomposes as
	\begin{equation*}
		J(v)=(a+bv)\gamma'(v)+J^{\bot}(v),
	\end{equation*}
	where $a=g(J(0),\gamma'(0))$, $b=g((\nabla_{\gamma'} J)_p, \gamma'(0))$
and $J^{\bot}$  is an orthogonal Jacobi field along $\gamma$.
In this background, the Jacobi field $\xu$ evaluated at $p $ reads $\xu|_p=\alpha^\prime(u)$, so
	\begin{equation*}
		a=g(\alpha^\prime(u),Z_p)=\Vert\alpha^\prime(u)\Vert\cos{\sigma_p},
	\end{equation*}	
	and
	\begin{equation*}
		b=g((\nabla_{\xv} \xu)|_p,Z_p)=\frac{1}{2}\xu (g(\xv,\xv))|_p=0,
	\end{equation*}
	since $\Vert\xv\Vert=1$. Decomposition (\ref{Jacdecomposition}) follows from such values.
\end{proof}

\subsection{Sannia invariants and the Fundamental Theorem of Ruled Surfaces}

Orthonormal frames along curves are often considered in Geometry and Physics
to address problems in relation to the geometry of manifolds and submanifolds.
In this work we consider frames whose first vector is determined by the geodesics defining the ruled surface.
Under suitable regularity hypothesis, a possible way of constructing the rest of the vectors in the frame is by considering successive derivatives of the first one along the tangent direction to the curve (see for example \cite{Castrillonetal2015}). However, it may occur that some of the derivatives are linearly dependent with some other
vector in the frame, which would prevent such a set of vectors to become a basis. With the following results we first intend to give an idea of the size of the set of ruled surfaces admitting such a relevant frame, and then we prove the main result of this work, the \emph{fundamental theorem of ruled surfaces}, in which we state that certain ruled surfaces are uniquely determined from certain invariants.

\begin{definition}
	A vector field $Z\in\mathfrak{X}(\alpha)$ along a smooth curve $\alpha
	\colon I\rightarrow M$ taking values into a manifold $M$ endowed with a
	linear connection $\nabla$ is said to be in \emph{general position} at $u_{0}\in I$ if the vector fields $Z$ and $\nabla
	_{\alpha^{\prime}}Z$ along $\alpha$ are linearly independent at $u_{0}$. The
	vector field $Z\in\mathfrak{X}(\alpha)$ is in general position
	if it is in general position for every $u\in(a,b)$.
\end{definition}

The following result quantifies the set of vectors in general position along a curve on a manifold endowed with a linear connection.
To this purpose it will be necessary to make use of the so-called jet bundles. We refer the reader to
\cite[section 41]{KrieglMichor1997} for more details on this topic.
\begin{proposition}
	\label{Propjetgeneralposition}
	Let $M$ be a $3$-dimensional manifold endowed with a linear
	connection $\nabla$ and let $\alpha\colon I\rightarrow M$ be a smooth curve
	on $M$. The set of vector fields $Z\in\mathfrak{X}(\alpha)$ along $\alpha$
	that are in general position is a dense set on $\mathfrak{X}(\alpha)$ with respect to the strong topology.
\end{proposition}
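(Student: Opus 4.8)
The plan is to reduce the statement to a transversality/genericity argument in a suitable jet bundle. Fix the curve $\alpha$ and work in the affine space $\mathfrak{X}(\alpha)$ of smooth vector fields along $\alpha$, equipped with the strong (Whitney $C^\infty$) topology. The condition that $Z$ be in general position at $u_0$ is a condition on the $1$-jet of $Z$ at $u_0$: writing the $1$-jet of $Z$ at $u$ as the pair $(Z(u),\nabla_{\alpha'}Z(u))\in T_{\alpha(u)}M\times T_{\alpha(u)}M$ (using the connection to split the $1$-jet bundle), general position at $u$ means precisely that this pair lies in the open subset $\mathcal{U}\subset T_{\alpha(u)}M\times T_{\alpha(u)}M$ consisting of linearly independent pairs. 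Let $\Sigma$ be the complement, i.e. the set of pairs $(X,Y)$ with $X\wedge Y = 0$; this is a closed algebraic subvariety. The key linear-algebra input is that, fibrewise over a point $q\in M$ with $\dim T_qM = 3$, the bad set $\{(X,Y): X,Y \text{ linearly dependent}\}$ has codimension $2$ inside the $6$-dimensional space $T_qM\times T_qM$ (it is the union of the locus $X=0$, which is codimension $3$, and the locus $Y=tX$ for some $t$, which has dimension $3+1=4$, hence codimension $2$). Thus, letting $q$ vary along $\alpha(I)$, the bad set inside the total space of the bundle $u\mapsto T_{\alpha(u)}M\times T_{\alpha(u)}M$ over the $1$-dimensional base $I$ has codimension $2-1 = 1 > 0$ after accounting for the base direction — more precisely, the relevant jet-space stratification has the bad stratum of codimension $\geq 2$, and since $\dim I = 1 < 2$, a generic section of the $1$-jet bundle along $\alpha$ avoids it entirely.

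Concretely I would carry out the following steps. First, identify the space of $1$-jets along $\alpha$: using $\nabla$, the bundle $J^1(\alpha,TM)\to I$ whose fibre over $u$ records $(Z(u),(\nabla_{\alpha'}Z)(u))$ is the pullback $\alpha^*(TM\oplus TM)$, a rank-$6$ vector bundle over the $1$-manifold $I$. Second, describe the prolongation map $j^1\colon \mathfrak{X}(\alpha)\to \Gamma(J^1(\alpha,TM))$, $Z\mapsto (u\mapsto (Z(u),(\nabla_{\alpha'}Z)(u)))$, and note that it is surjective onto smooth sections (given prescribed value and prescribed covariant derivative at each point, one can solve the resulting first-order linear ODE; more simply, in a local frame the map $Z\mapsto (Z, Z' + (\text{connection terms})Z)$ is clearly onto) and continuous for the strong topologies. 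Third, invoke the Thom/Whitney jet-transversality theorem for sections (as in \cite[section 41]{KrieglMichor1997}, or the Thom transversality theorem in the strong topology): since the bad set $\mathcal{S}\subset J^1(\alpha,TM)$ is a closed submanifold (or a finite stratified union of submanifolds) of codimension $\geq 2$ in a total space of dimension $1+6 = 7$, a generic (residual, hence dense) set of sections $s\in\Gamma(J^1(\alpha,TM))$ is transverse to $\mathcal{S}$; but transversality of a $1$-dimensional domain to a codimension-$\geq 2$ submanifold means the image misses $\mathcal{S}$ altogether. Fourth, pull this back along $j^1$: the set of $Z$ with $j^1(Z)$ avoiding $\mathcal{S}$ is exactly the set of $Z$ in general position on all of $I$, and by surjectivity and continuity of $j^1$ it is dense in $\mathfrak{X}(\alpha)$ in the strong topology.

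The main obstacle — or at least the point needing the most care — is twofold. First, one must be honest about which version of jet transversality applies: the standard Thom transversality theorem is usually stated for maps $N\to M$ and their $k$-jet extensions, whereas here we want transversality of \emph{sections of a fixed bundle over $I$}; the clean way is to note that a section of $\alpha^*(TM\oplus TM)$ is the same as a map $I\to TM\oplus TM$ lifting $\alpha$, equivalently (after trivializing over the contractible-up-to-exhaustion $1$-manifold $I$) a map $I\to \mathbb{R}^6$, and then the bad set is a closed semialgebraic subset of codimension $2$ in each fibre, so Thom's theorem in the strong/Whitney topology (\cite{KrieglMichor1997}) gives genericity of avoidance; one should also note that $\mathcal{S}$ is not a manifold (it has a singular stratum where $X=0$) so one applies the transversality theorem stratum by stratum, each stratum having codimension $\geq 2$. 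Second, one must confirm that $j^1$ is open onto its image, or at least that preimages of dense sets under $j^1$ are dense — this follows because $j^1$ admits continuous local right inverses (solve the ODE), so it is an open map onto the space of sections. Provided these two technical points are handled, the codimension count "$2 > \dim I = 1$" does all the real work, and density (rather than merely nonemptiness or openness) comes for free from the residuality in Thom's theorem.
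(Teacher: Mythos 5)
Your overall strategy is the same as the paper's: use the connection to identify the $1$-jet bundle of $\alpha^{*}TM$ with $\alpha^{*}TM\oplus\alpha^{*}TM$ via $j^{1}_{u}Z\mapsto\bigl(Z(u),(\nabla_{\alpha'}Z)(u)\bigr)$, observe that the locus of linearly dependent pairs is a closed, stratified set whose strata have codimension at least $2$, invoke Thom transversality in the strong topology, and note that for a one-dimensional source transversality to codimension-$\geq 2$ strata forces avoidance. The paper argues exactly this way (quoting Tougeron's transversality theorem where you quote Kriegl--Michor), and your stratification into $\{X=0\}$ (codimension $3$) and $\{Y=tX\}$ (codimension $2$), handled stratum by stratum, is a correct treatment of the singular locus, matching the paper's dimension count.

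There is, however, a genuine error in steps 2 and 4 of your plan. The prolongation $j^{1}\colon\mathfrak{X}(\alpha)\to\Gamma\bigl(J^{1}(\alpha,TM)\bigr)$ is \emph{not} surjective and admits no continuous (even local) right inverse: under your identification a section of the jet bundle is an arbitrary pair $(W_{1},W_{2})$ of vector fields along $\alpha$, and it is holonomic, i.e.\ of the form $(Z,\nabla_{\alpha'}Z)$, precisely when $W_{2}=\nabla_{\alpha'}W_{1}$. Prescribing both the value and the covariant derivative of $Z$ as independent functions of $u$ is over-determined, not a solvable first-order ODE; in a local frame you would need $W_{1}'+\Gamma W_{1}=W_{2}$, which generic pairs violate. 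Hence the route ``generic sections of the jet bundle avoid the bad set, and density pulls back because $j^{1}$ is open onto the space of sections'' does not go through. Fortunately the detour is unnecessary: the Thom jet-transversality theorem you cite is stated directly for jet prolongations, i.e.\ it asserts that $\{Z\in\Gamma(\alpha^{*}TM)\colon j^{1}Z\pitchfork T_{i}\}$ is residual, hence dense, in $\Gamma(\alpha^{*}TM)$ with the strong topology; combined with your codimension count this gives the proposition at once, and it is exactly how the paper concludes. With steps 2 and 4 deleted and step 3 rephrased for holonomic sections, your argument coincides with the paper's proof.
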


\begin{proof}
	The sections of the bundle $E=\alpha^{\ast}TM\rightarrow I$ are vector
	fields along $\alpha$. Let consider the $1$-jet bundle of $E$. The morphism on
	$J^{1}E$ given by
	\begin{align*}
		\varrho\colon J^{1}E  &  \longrightarrow E\oplus E\\
		j_{u}^{1}Z  &  \mapsto\left(  Z(u),(\nabla_{\alpha^{\prime}}Z)(u)\right)
	\end{align*}
	is an isomorphism. We define the singular set
	\[
	S=\{(Z_{1},Z_{2})\in E\oplus E\colon Z_{1}\wedge Z_{2}=0\},
	\]
	which can written as the (non-disjoint) union $S=S_{1}\cup S_{2}$, with
	\begin{align*}
		S_{1}  &  =\{(Z_{1},Z_{2})\in E\oplus E\colon Z_{2}=fZ_{1}\}\simeq E\times
		\mathbb{R},\\
		S_{2}  &  =\{(Z_{1},0)\in E\oplus E\}\simeq E.
	\end{align*}
	Both $S_{1}$ and $S_{2}$ are closed submanifolds of $E\oplus E$, of dimensions
	$3+1$ (resp. codimension $2-1$) and $3$ (resp. codimension $3$) respectively.
	The same properties apply to $T_{1}=\varrho^{-1}(S_{1})$ and $T_{2}%
	=\varrho^{-1}(S_{2})$. According to Thom's transversality Theorem (\emph{cf.}
	\cite[VII, Th\'{e}or\`{e}me 4.2]{Tou}), the set of curves $Z(u)\in\Gamma(E)$
	such that $j^{1}Z$ is transversal to both $T_{1}$ and $T_{2}$ is open and
	dense in $\Gamma(E)$ with the strong topology. For these curves $Z(u)$, the
	codimension of $(j^{1}Z)^{-1}(T_{1})$ is $2$ and the codimension of
	$(j^{1}Z)^{-1}(T_{2})$ is $3$. Since these are set in $I\subset\mathbb{R}$,
	they must be empty. Therefore, for this dense set of curves $Z(u)$, we have
	that $j^{1}Z\cap S=\varnothing$, and the proof is complete.
\end{proof}

\begin{definition}
	Let $(M,g)$ be a Riemannian $3$-manifold. A parametrized ruled surface $\mathbf{X}\colon I\times\mathbb{R}\rightarrow (M,g)$ is said to be a \emph{Sannia ruled surface} if
	$Z\in\mathfrak{X}(\alpha)$ is in general position with respect to the
	Levi-Civita connection of $g$.
\end{definition}

\begin{proposition}
   \label{propSanniaderivatives}
	\label{PropSanniaFrame}Let $(M,g)$ be an oriented $3$-dimensional Riemannian manifold
	and let $\mathbf{X}\colon I\times\mathbb{R}\rightarrow (M,g)$ be a Sannia ruled surface
	defined by a vector field $Z$ along a smooth curve
	$\alpha\colon I\rightarrow (M,g)$. Then, there exist unique vector fields $X_{i}$, $1\leq i\leq3$, defined along $\alpha$ and smooth functions
	$\kappa_{i}\colon I\rightarrow\mathbb{R}$, $\,0\leq i\leq2$, with
	$\kappa_{0}>0$ and $\kappa_{1}>0$, such that
	
	\begin{enumerate}
		\item $(  X_{1}(u),X_{2}(u),X_{3}(u) ) $
		is a positively oriented orthonormal linear frame of $T_{\alpha (u)}M$ for $u\in I$.
		
		\item The following formulas hold:
		
		\begin{enumerate}
			\item $Z=\kappa_{0}X_{1}$, \label{Sannia0}
			
			\item $\nabla_{\alpha^{\prime}}X_{1}=\kappa_{1}X_{2}%
			,$  \label{Sannia1}
			
			\item $\nabla_{\alpha^{\prime}}X_{2}=-\kappa_{1}%
			X_{1}+\kappa_{2}X_{3},$ \label{Sannia2}
			
			\item $\nabla_{\alpha^{\prime}}X_{3}=-\kappa_{2}%
			X_{2}.$ \label{Sannia3}
		\end{enumerate}
	\end{enumerate}
\end{proposition}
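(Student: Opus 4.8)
The plan is to construct the frame and the functions by an explicit Gram–Schmidt-type procedure applied to the ordered triple $(Z,\nabla_{\alpha'}Z,\,?)$, where the third slot is filled by orientation, and then to read off the structure equations. First I would set $\kappa_0=\Vert Z\Vert$ (which is smooth and positive since $Z$ is non-vanishing) and define $X_1=Z/\kappa_0$, giving (2)(a) immediately. Differentiating $g(X_1,X_1)=1$ along $\alpha$ shows $\nabla_{\alpha'}X_1\perp X_1$; the general-position hypothesis says $Z$ and $\nabla_{\alpha'}Z$ are linearly independent at every $u$, and since $\nabla_{\alpha'}Z=\kappa_0'X_1+\kappa_0\nabla_{\alpha'}X_1$, this forces $\nabla_{\alpha'}X_1\neq 0$ for all $u$. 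Hence I may set $\kappa_1=\Vert\nabla_{\alpha'}X_1\Vert>0$ and $X_2=\nabla_{\alpha'}X_1/\kappa_1$, a smooth unit field orthogonal to $X_1$; this is (2)(b). Finally I define $X_3$ to be the unique unit vector making $(X_1,X_2,X_3)$ a positively oriented orthonormal frame, i.e. $X_3=X_1\times X_2$ with respect to the metric and orientation; this is smooth because $X_1,X_2$ are, and it completes (1).

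Next I would derive the remaining two structure equations purely from orthonormality. Write $\nabla_{\alpha'}X_2=a_1X_1+a_2X_2+a_3X_3$. Differentiating $g(X_2,X_2)=1$ gives $a_2=0$; differentiating $g(X_1,X_2)=0$ gives $a_1=-g(X_1,\nabla_{\alpha'}X_2)=g(\nabla_{\alpha'}X_1,X_2)=\kappa_1$, so $a_1=-\kappa_1$ after moving the sign, i.e. $a_1=-\kappa_1$; and we set $\kappa_2=a_3=g(\nabla_{\alpha'}X_2,X_3)$, a smooth function, not required to have a sign. This is (2)(c). For (2)(d), write $\nabla_{\alpha'}X_3=b_1X_1+b_2X_2+b_3X_3$; differentiating $g(X_3,X_3)=1$ gives $b_3=0$; differentiating $g(X_1,X_3)=0$ gives $b_1=-g(X_1,\nabla_{\alpha'}X_3)=g(\nabla_{\alpha'}X_1,X_3)=\kappa_1 g(X_2,X_3)=0$; and differentiating $g(X_2,X_3)=0$ gives $b_2=-g(X_2,\nabla_{\alpha'}X_3)=g(\nabla_{\alpha'}X_2,X_3)=\kappa_2$, so $b_2=-\kappa_2$. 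This yields (2)(d).

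For uniqueness, suppose $(\tilde X_i,\tilde\kappa_i)$ is another such system. Equation (2)(a) together with $\kappa_0>0$ forces $\tilde\kappa_0=\Vert Z\Vert=\kappa_0$ and $\tilde X_1=X_1$. Then (2)(b) with $\kappa_1>0$ forces $\tilde\kappa_1=\Vert\nabla_{\alpha'}X_1\Vert=\kappa_1$ and $\tilde X_2=X_2$. Orthonormality and positive orientation of $(\tilde X_1,\tilde X_2,\tilde X_3)=(X_1,X_2,\tilde X_3)$ then leave only $\tilde X_3=X_1\times X_2=X_3$, and (2)(c) forces $\tilde\kappa_2=g(\nabla_{\alpha'}X_2,X_3)=\kappa_2$. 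So the whole system is determined.

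The only genuine subtlety — and the one point I would be careful to spell out — is smoothness of $\kappa_0$ and $\kappa_1$ at points where one might worry about a square root: here $\kappa_0=\Vert Z\Vert$ is smooth because $Z$ never vanishes, and $\kappa_1=\Vert\nabla_{\alpha'}X_1\Vert$ is smooth because, by the general-position hypothesis, $\nabla_{\alpha'}X_1$ never vanishes (so the norm stays away from $0$ and the square root is smooth). Everything else is elementary linear algebra fibrewise plus differentiation of constant inner products, so there is no real obstacle beyond bookkeeping of signs in the structure equations.
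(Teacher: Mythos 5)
Your construction is essentially identical to the paper's proof: set $\kappa_0=\Vert Z\Vert$, $X_1=Z/\kappa_0$, $\kappa_1=\Vert\nabla_{\alpha'}X_1\Vert$ (nonzero by the general-position hypothesis), $X_2=\nabla_{\alpha'}X_1/\kappa_1$, complete to a positively oriented orthonormal frame with $X_3$, and obtain the structure equations by differentiating $g(X_i,X_j)$; your explicit uniqueness and smoothness remarks merely spell out what the paper leaves implicit. (The intermediate sign chains you write for $a_1$ and $b_2$ are garbled as displayed, but the concluding coefficients $a_1=-\kappa_1$ and $b_2=-\kappa_2$ are the correct ones.)
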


\begin{proof}
	We define
	\begin{eqnarray*}
		X_{1}  & = &(\kappa_{0})^{-1}Z,\label{X1}\\
		X_{2}  & = &(\kappa_{1})^{-1}\nabla_{\alpha^{\prime}}		X_{1},\label{X2}%
	\end{eqnarray*}
	where $\kappa_{0}=\left\Vert Z \right\Vert $, $\kappa_{1}=\left\Vert \nabla_{\alpha^{\prime}}X_{1}\right\Vert $. For $X_3$ we consider the unique vector field defining a positive orthonormal basis with $X_1$ and $X_2$. Differentiating $g(X_i,X_j)$ with respect to $u$ we get the formulas  (\ref{Sannia0})-(\ref{Sannia3}) in the statement.
\end{proof}

\begin{definition}
\label{definv}
	The frame $(  X_{1},X_{2},X_{3} )  $
	along $\alpha$ determined in the above Proposition is called \emph{Sannia
		frame along} $\alpha$, and the functions $\kappa_{0},\kappa
	_{1},\kappa_{2}$ are the \emph{Sannia invariants} of the
	ruled surface $\mathbf{X}\colon I\times\mathbb{R}\rightarrow (M,g)$.

\end{definition}

Let $\theta$ and $\varphi$ denote the spherical angles of $\alpha '$ with respect to $(X_{1},X_{2},X_{3})$; that is,
 \begin{equation}
		\label{basecurvedecomposition}
		\frac{\alpha^\prime}{\Vert\alpha '\Vert}=\cos{\varphi}\cos{\theta}\, X_1+\sin{\varphi}\,X_2+\cos{\varphi}\sin{\theta}\,X_3.
	\end{equation}
 If $\varphi =0$, then $\theta$ is the base angle $\sigma$ (see Definition \ref{anglefunction}). We will show in Section \ref{Strictioncurves} that this case corresponds to the base curve being a striction one.

 \begin{remark}
It would be more accurate to collect the angle invariants $\theta$ and $\varphi$ into a single smooth function $\varsigma \colon I\to S^2\subset \mathbb{R}^3$ assigning the coordinates of $\alpha '$ with respect to the Sannia basis. However, in order to be close to the classical results in the Euclidean space, we will keep track of the angles $\theta$ and $\varphi$ instead of $\varsigma$.
\end{remark}

In Definition \ref{Defdistribparam} we have presented a function that extends the traditional Euclidean distribution parameter to the whole surface in generic $3$-dimensional Riemannian backgrounds.
With a view to recovering the distribution parameter in the classical sense,
we next give the value of this function in terms of the invariants $\kappa _0$, $\kappa _1$, $\kappa _2$, $\theta$ and $\varphi$ of the Sannia frame associated
with a base curve which does not have to be necessarily a striction curve.
\begin{proposition}
	If $( X_{1},X_{2},X_{3} )  $ is the Sannia frame of a Sannia ruled surface  $\mathbf{X}\colon I\times\mathbb{R}\rightarrow (M,g)$, then the value of the distribution parameter function $\lambda$
	on the base curve $\alpha$ is
	\[
 \lambda(u,0)=
 \frac{\Vert\alpha^\prime(u) \Vert\kappa_{0}^{2}(u)\kappa_1(u) \cos{\varphi}_{\alpha(u)}\sin{\theta}_{\alpha(u)}}{\left(\frac{d\kappa_0(u)}{du}\right)^2+\kappa_0^{2}(u)\kappa_1^{2}(u)}.
	\]
\end{proposition}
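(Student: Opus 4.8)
The plan is to evaluate the defining formula \eqref{distribparamfunction} directly at $v=0$ and to rewrite each of the three vectors appearing in it in terms of the Sannia frame. First I would record the easy identifications: by construction $\xv(u,0)=Z(u)=\kappa_{0}(u)X_{1}(u)$ and $\xu(u,0)=\alpha'(u)$, so the only quantity that needs work is $\nabla_{\xu}\xv$ evaluated along the base curve, which controls both numerator and denominator.

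To compute $(\nabla_{\xu}\xv)|_{v=0}$ I would use that $[\xu,\xv]=0$, hence $\nabla_{\xu}\xv=\nabla_{\xv}\xu$ everywhere; restricting to $v=0$, and using that a covariant derivative in the direction $\xu$ depends only on the values of the differentiated field along the integral curve $v\mapsto \mathbf{X}(u,v)$ at $v=0$ is the curve $\alpha$ traversed by $\xu=\alpha'$, one gets $(\nabla_{\xu}\xv)|_{(u,0)}=\nabla_{\alpha'(u)}Z$. Expanding $Z=\kappa_{0}X_{1}$ by the Leibniz rule and invoking the Sannia equation \eqref{Sannia1}, $\nabla_{\alpha'}X_{1}=\kappa_{1}X_{2}$, yields
\[
(\nabla_{\xu}\xv)|_{(u,0)}=\frac{d\kappa_{0}}{du}\,X_{1}+\kappa_{0}\kappa_{1}\,X_{2}.
\]
Since $(X_{1},X_{2},X_{3})$ is orthonormal, this immediately gives $\Vert\nabla_{\xu}\xv\Vert^{2}|_{(u,0)}=(d\kappa_{0}/du)^{2}+\kappa_{0}^{2}\kappa_{1}^{2}$, the denominator in the stated formula.

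For the numerator I would plug the three vectors $\alpha'$, $\kappa_{0}X_{1}$ and $\frac{d\kappa_{0}}{du}X_{1}+\kappa_{0}\kappa_{1}X_{2}$ into $\mathrm{vol}_g$ and use multilinearity together with the alternating property: the $\frac{d\kappa_{0}}{du}X_{1}$ summand is killed because $X_{1}$ already occurs as the second argument, leaving $\kappa_{0}^{2}\kappa_{1}\,\mathrm{vol}_g(\alpha',X_{1},X_{2})$. Substituting the decomposition \eqref{basecurvedecomposition} of $\alpha'/\Vert\alpha'\Vert$ into the Sannia basis, the $X_{1}$ and $X_{2}$ components drop out again by antisymmetry, so only the $X_{3}$ component survives and $\mathrm{vol}_g(\alpha',X_{1},X_{2})=\Vert\alpha'\Vert\cos\varphi\sin\theta\,\mathrm{vol}_g(X_{3},X_{1},X_{2})=\Vert\alpha'\Vert\cos\varphi\sin\theta$, using that $(X_{1},X_{2},X_{3})$ is a positively oriented orthonormal frame so $\mathrm{vol}_g(X_{1},X_{2},X_{3})=1$ and the cyclic permutation leaves this unchanged. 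Dividing the numerator $\Vert\alpha'\Vert\kappa_{0}^{2}\kappa_{1}\cos\varphi\sin\theta$ by the denominator computed above produces exactly the claimed expression for $\lambda(u,0)$. There is essentially no hard step here; the only point requiring a careful word is the identification $(\nabla_{\xu}\xv)|_{(u,0)}=\nabla_{\alpha'}Z$, which is where the hypothesis $[\xu,\xv]=0$ and the locality of covariant differentiation are used.
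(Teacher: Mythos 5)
Your computation is correct and is exactly the route the paper takes: insert $\xu(u,0)=\alpha'$, $\xv(u,0)=\kappa_0X_1$ and $\nabla_{\alpha'}Z=\tfrac{d\kappa_0}{du}X_1+\kappa_0\kappa_1X_2$ (from (\ref{Sannia0})--(\ref{Sannia1})) together with \eqref{basecurvedecomposition} into \eqref{distribparamfunction} at $v=0$; the paper merely states this substitution without the details you spell out. One small remark: the detour through $[\xu,\xv]=0$ is unnecessary for the identification $(\nabla_{\xu}\xv)|_{(u,0)}=\nabla_{\alpha'}Z$, since locality of the covariant derivative along the base curve (where $\xv$ restricts to $Z$ and $\xu$ to $\alpha'$) already gives it.
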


\begin{proof}
	The result follows directly when \eqref{basecurvedecomposition} and
	relations (\ref{Sannia0})-(\ref{Sannia3}) in Proposition \ref{propSanniaderivatives}
	are inserted into expression \eqref{distribparamfunction}, evaluated at $v=0$.
\end{proof}

\begin{remark}
	Note that it is always possible to consider a unit ruling $Z$ along an arc-length parametrized curve $\alpha$, i.e $\kappa_0=1$ and $\Vert \alpha^\prime \Vert=1$. In such case the value of the distribution parameter function along $\alpha$ reduces to
	\begin{equation}
		\label{basecurvdistribparam}		
		\lambda(u,0)=\frac{\cos{\varphi}_{\alpha(u)}\sin{\theta}_{\alpha(u)}}{\kappa_1(u)}.
	\end{equation}
\end{remark}

We next state and prove the local existence and uniqueness theorem of ruled surfaces in general $3$-dimensional Riemannian manifolds:

\begin{theorem}[Fundamental theorem of ruled surfaces]
	\label{EDOruledtheorem}
 	Let $(M,g)$ be a $3$-dimensional oriented Riemannian manifold and let
	$(e_{1},e_{2},e_{3})$ be a positively oriented orthonormal basis of
	$T_{p_{0}}M$, $p_0\in M$. Given smooth functions $\okap_{i}\colon(u_{0}-\varepsilon
	,u_{0}+\varepsilon)\rightarrow\mathbb{R}$, $0\leq i\leq 2
	$, $\overline{\theta} \colon (u_{0}-\varepsilon
	,u_{0}+\varepsilon)\to \mathbb{R}$, $\overline{\varphi}\colon (u_{0}-\varepsilon
	,u_{0}+\varepsilon)\to \mathbb{R}$ with $\overline{\kappa}_{0},\overline{\kappa}_1
	>0$, there exists $\delta<\varepsilon$, a curve
	$\alpha\colon(u_{0}-\delta,u_{0}+\delta)\rightarrow (M,g)$, parametrized by its arc length, and a vector field $Z\in \mathfrak{X}(\alpha)$ such that the Sannia ruled surface $\mathbf{X}\colon I\times\mathbb{R}\rightarrow (M,g)$, $\mathbf{X}(u,v)=\exp_{\alpha(u)}(vZ(u))$ satisfies:
	
	\begin{enumerate}
		\item $\alpha(u_{0})=p_{0}$,
		
		\item $X_{i}(u_{0})=e_{i}$ for $1\leq i\leq3$,
		
		\item $\kappa_{i}=\okap_{i}$ for $0\leq i\leq 2$, $\overline{\theta}= \theta$, $\overline{\varphi}=\varphi$,
	\end{enumerate}
where  $\kappa_{i}$, $0\leq i\leq2$, $\theta$ and $\varphi$ are the invariants of Definition \ref{definv}
and $X_{i}(u_{0})$ for $1\leq i\leq3$ is the Sannia frame.
In addition, given any two points $p_{0},p_{0}^{\prime}\in M$ and two oriented
orthonormal bases $(e_i)_{i=1}^3$, $(e'_i)_{i=1}^3$
of $T_{p_{0}}M$ and $T_{p_{0}^{\prime}}M$ respectively,
there always exists a local isometry around $p_{0}$ and $p_{0}^{\prime}$ sending one
ruled surface to the other if and only if $(M,g)$ is of constant curvature.
\end{theorem}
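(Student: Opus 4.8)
The plan is to split the statement into its two halves: the local existence of a Sannia ruled surface realizing a prescribed set of invariants, and the characterization of when uniqueness (equivalently, congruence under local isometry) holds.

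For the \textbf{existence} part I would set up a system of ODEs on the orthonormal frame bundle of $(M,g)$. Fix the point $p_0$ and the orthonormal basis $(e_1,e_2,e_3)$. I seek a curve $\alpha$ parametrized by arc length together with a frame $(X_1,X_2,X_3)$ along it satisfying the Sannia evolution equations (\ref{Sannia0})--(\ref{Sannia3}) with $\kappa_i=\okap_i$, subject to the constraint (\ref{basecurvedecomposition}) that prescribes $\alpha'$ in terms of the frame and of $\overline\theta,\overline\varphi$. Concretely: declare $\alpha'(u)=\cos\overline\varphi\cos\overline\theta\,X_1+\sin\overline\varphi\,X_2+\cos\overline\varphi\sin\overline\theta\,X_3$ (a unit vector, so $\alpha$ is automatically arc-length parametrized), and impose $\nabla_{\alpha'}X_1=\okap_1 X_2$, $\nabla_{\alpha'}X_2=-\okap_1 X_1+\okap_2 X_3$, $\nabla_{\alpha'}X_3=-\okap_2 X_2$. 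Writing everything in a local chart, this is a first-order ODE system for the map $u\mapsto(\alpha(u),X_1(u),X_2(u),X_3(u))$ into the orthonormal frame bundle $O(M)$; the right-hand side is smooth (it involves the Christoffel symbols of $g$ and the given smooth functions), so the Picard--Lindel\"of theorem yields a unique solution on some interval $(u_0-\delta,u_0+\delta)$ with the initial conditions $\alpha(u_0)=p_0$, $X_i(u_0)=e_i$. One checks that orthonormality and positive orientation of $(X_1,X_2,X_3)$ are preserved along $\alpha$ (the evolution matrix is skew-symmetric, as read off from (\ref{Sannia1})--(\ref{Sannia3})), so $(X_i)$ stays in $O(M)$. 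Then set $Z=\okap_0 X_1$ and $\mathbf{X}(u,v)=\exp_{\alpha(u)}(vZ(u))$. Shrinking $\delta$ so that $d\mathbf{X}$ has rank $2$ (which holds near $v=0$ since $\xu(u,0)=\alpha'(u)\neq 0$ is not parallel to $Z$ unless $\cos\overline\varphi\cos\overline\theta=1$; one may need a mild genericity remark here, or simply restrict $v$ to a small interval), $\mathbf{X}$ is a Sannia ruled surface, and by construction its Sannia frame is $(X_i)$ and its invariants are $\okap_i,\overline\theta,\overline\varphi$, because Proposition \ref{propSanniaderivatives} characterizes the Sannia data uniquely and our $(X_i)$ satisfies exactly those defining relations with $\kappa_0=\|Z\|=\okap_0>0$, $\kappa_1=\|\nabla_{\alpha'}X_1\|=\okap_1>0$.

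For the \textbf{equivalence/uniqueness} part, I would argue both implications. Suppose first $(M,g)$ has constant curvature. Given two points with two oriented orthonormal frames, run the existence construction \emph{with the same prescribed invariant functions} $\okap_i,\overline\theta,\overline\varphi$ starting from each frame; this yields two Sannia ruled surfaces. Since $(M,g)$ is a space form, there is a local isometry $F$ carrying $p_0\mapsto p_0'$ and $e_i\mapsto e_i'$ (isometry group acts transitively on orthonormal frames); $F$ intertwines the Levi-Civita connections, hence pushes forward solutions of the Sannia ODE system to solutions with the transformed initial data and the same coefficient functions, and by uniqueness of the ODE solution $F\circ\alpha=\alpha'$ and $F_*X_i=X_i'$, whence $F\circ\mathbf{X}=\mathbf{X}'$. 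Conversely, suppose that for every choice of base points and frames such a local isometry matching the two Sannia ruled surfaces built from common invariants exists. The idea is to show this forces the sectional curvature to be constant. Here I would exploit the extra freedom: pick $p_0,p_0'$ and frames, run the construction to get $\mathbf{X}$ through $p_0$, and note that the hypothesis says $\mathbf{X}$ can be isometrically moved to a surface through $p_0'$ with frame $(e_i')$; pulling back, $K(T_{p_0}\Sigma)$ must equal $K(T_{p_0'}\Sigma')$. Varying the prescribed angle functions $\overline\theta,\overline\varphi$ one can make the tangent plane $T_{p_0}\Sigma=\mathrm{span}(\alpha'(u_0),Z(u_0))=\mathrm{span}(\cos\overline\varphi\cos\overline\theta\,e_1+\sin\overline\varphi\,e_2+\cos\overline\varphi\sin\overline\theta\,e_3,\;e_1)$ be essentially an arbitrary $2$-plane through $e_1$ in $T_{p_0}M$, and varying the frame one varies $e_1$; since isometries preserve sectional curvature, one deduces that all sectional curvatures at $p_0$ agree and, $p_0$ being arbitrary, that $(M,g)$ has pointwise constant curvature, hence (by Schur, $\dim\geq 2$ and connectedness — or just note the argument gives a constant value directly) globally constant curvature.

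The \textbf{main obstacle} I anticipate is the converse direction of the equivalence statement: turning ``any two ruled surfaces built from the same invariants are isometric'' into ``the curvature is constant.'' The delicate point is that the Sannia invariants $(\kappa_i,\theta,\varphi)$ do \emph{not} a priori encode the ambient curvature along the surface — unlike Frenet-type data for curves, where constant curvature is exactly the obstruction measured by the failure of the fundamental theorem (cf.\ \cite{Castrillonetal2015}). So I must show that if the curvature were non-constant, there exist two choices of (point, frame) from which the \emph{same} invariant data generate ruled surfaces that are \emph{not} isometric — the natural invariant distinguishing them being the ambient sectional curvature of the tangent plane, read off through formula (\ref{intcurvature}) relating $K^\Sigma$, $K_{\mathrm{ext}}^\Sigma$ and $K(T_p\Sigma)$. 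Making the choice of angle functions so that the tangent plane sweeps out enough $2$-planes to detect non-constancy, while keeping the construction consistent (the ODE must stay solvable and $d\mathbf{X}$ rank $2$), is where the real care is needed; I expect this to require a short transversality/genericity argument in the spirit of Proposition \ref{Propjetgeneralposition}, plus an appeal to the fact that a Riemannian manifold all of whose sectional curvatures at every point coincide is a space form.
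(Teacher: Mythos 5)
Your existence construction and your proof that constant curvature implies congruence are essentially the paper's own: the paper encodes the prescription of $\alpha'$ through $\overline{\theta},\overline{\varphi}$ together with the Sannia equations as the first-order system (\ref{sistemaEDO}) in normal coordinates, solves it by the standard ODE existence and uniqueness theorem, verifies orthonormality by showing that $\phi_{ij}=g(X_i,X_j)$ solves a linear system whose constant solution matches the initial data (the same fact as your skew-symmetry remark), and then applies the exponential map; in the constant-curvature case it likewise transports the solution by a local isometry matching the two frames and invokes ODE uniqueness to conclude $\phi\circ\mathbf{X}=\mathbf{X}'$. (The paper normalizes $\overline{\kappa}_0=1$, while you keep $Z=\overline{\kappa}_0X_1$; both are fine. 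Your worry about the rank of $d\mathbf{X}$ is also not needed: the statement only requires a Sannia ruled surface, i.e.\ $Z$ and $\nabla_{\alpha'}Z$ independent, which is automatic from $\overline{\kappa}_1>0$.) The one genuine divergence is the converse implication, which you flag as the main obstacle: the paper disposes of it in one line, reading the hypothesis as saying that any framed point can be carried to any other framed point by a local isometry, i.e.\ $(M,g)$ is locally isotropic, and citing the classical fact that such a space has constant curvature (\emph{cf.} \cite{KN}). Your alternative — comparing ambient sectional curvatures of the tangent planes and finishing with Schur — does work and needs none of the transversality you anticipate: choosing $\overline{\varphi}=0$, $\overline{\theta}=\pi/2$ gives $T_{p_0}\Sigma=\mathrm{span}(e_1,e_3)$, so varying the orthonormal frames realizes arbitrary $2$-planes at $p_0$ and $p_0'$, and under the natural reading that the isometry matches the parametrized surfaces (hence carries $p_0$ to $p_0'$ and $T_{p_0}\Sigma$ to $T_{p_0'}\Sigma'$, as ODE uniqueness forces in the forward direction) the equality of sectional curvatures is immediate, with no detour through the Gauss equation. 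In effect your route re-proves, in this particular situation, the isotropy-implies-constant-curvature theorem that the paper simply cites; it buys self-containedness at the cost of length, and no step of your plan would actually fail.
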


\begin{proof}
	We consider the vector bundle $p_{M}\colon\oplus^{3}TM\rightarrow M$ and a
	normal coordinate system $(U,(x_{i})_{i=1}^{3})$ centred at $p_{0}$ associated
	with the orthonormal basis $(e_i)_{i=1}^3$. We define the natural
	coordinate system $(x^{i},y_{k}^{j})$, $i,j,k=1,2,3$ in $p_{M}^{-1}(U)$ such
	that
	\[
	w_{j}=y_{j}^{i}(w)\left.  \frac{\partial}{\partial x^{i}}\right\vert
	_{p},\qquad\forall w=\left(  w_{1},w_{2},w_{3}\right)  \in\oplus^{3}%
	T_{x}M,\quad p\in U.
	\]
    Let $X\colon I \rightarrow\oplus^{3}TM$,
	$a<u_{0}<b$, be the curve given by
	\begin{equation*}
	X(u)=\left(  X_{1}(u),X_{2}(u),X_{3}(u)\right).	
	\end{equation*}
	    Without loss of generality, the ruling can be considered to be of unit length, i.e. $\okap_0=1$
    in Proposition \ref{PropSanniaFrame}. This condition together with the formulas  (\ref{Sannia0})-(\ref{Sannia3})     can be expressed as follows:
	\begin{equation}
		\left\{
		\begin{array}
			[c]{l}%
			\dfrac{d\left(  x^{i}\circ\alpha\right)  }{du}\!=\!
			\cos{\overline{\varphi}}\cos{\overline{\theta}}\,(y_1^i\circ X)+
			\sin{\overline{\varphi}}\,(y_2^i\circ X)+\cos{\overline{\varphi}}\sin{\overline{\theta}}\,(y_3^i\circ X),\smallskip\\
			\dfrac{d\left(  y_{1}^{i}\circ X\right)  }{du}\!=\!%
			 \okap_{1}\left(  y_{2}^{i}\circ X\right)
			\!-\!\Gamma_{jk}^{i}\dfrac{d\left(  x^{j}\circ\alpha\right)  }{du}\left(
			y_{1}^{k}\circ X\right)   ,\smallskip\\
			\dfrac{d\left(  y_{2}^{i}\circ X\right)  }{du}\!=\!  -\okap_{1}\left(  y_{1}^{i}\circ X\right)  \!+\!\okap_{2}\left(  y_{3}^{i}\circ X\right)
			\!-\!\Gamma_{jk}^{i}\dfrac{d\left(  x^{j}\circ\alpha\right)  }{du}\left(
			y_{2}^{i}\circ X\right)    ,\smallskip\\
			\dfrac{d\left(  y_{3}^{i}\circ X\right)  }{du}\!=\!  -\okap_{2}\left(  y_{2}^{i}\circ X\right)  \!-\!\Gamma_{jk}^{i}\dfrac{d\left(  x^{j}\circ\alpha\right)  }%
			{du}\left(  y_{3}^{i}\circ X\right)    ,\smallskip
		\end{array}
		\right.  \label{sistemaEDO}%
	\end{equation}
	with $1\leq i\leq3$ and where $\Gamma_{jk}^{i}$ are the components of the
	Levi-Civita connection $\nabla$ of $g$ with respect to the coordinate system
	$(x_{i})_{i=1}^{3}$.
	From the general theory of
	ODE's, the system (\ref{sistemaEDO}) has unique solutions $x^{i}\circ\alpha
	,y_{k}^{j}\circ X$, $i,j,k=1,2,3$, satisfying the initial conditions $\left(  x^{i}\circ\alpha\right)
	(u_{0})=x^{i}(x_{0})$, $\left(  y_{k}^{j}\circ X\right)  (u_{0}%
	)=\hat{\delta}_{k}^{j}$, $i,j,k=1,2,3$, where $\hat{\delta}$ is the Kronecker
	delta. We consider the ruled surface $\mathbf{X}\colon I\times\mathbb{R}\rightarrow (M,g)$, $\mathbf{X}(u,v)=\exp_{\alpha(u)}(vZ(u))$
	with
	\[
	\alpha(u)=(x^{1}(u),x^{2}(u),x^{3}(u)),\qquad Z(u)=y_{1}^{i}(u)\left(
	\frac{\partial}{\partial x^{i}}\right)  _{\alpha(u)}.
	\]
	We first prove that
	\[
	X_{k}(u)=y_{k}^{i}(u)\left(  \frac{\partial}{\partial x^{i}}\right)
	_{\alpha(u)},\qquad1\leq k\leq3,
	\]
	define an orthonormal basis. To this end, consider the functions
	\[
	\phi_{ij}(u)=g_{\alpha(u)}(X_{i}(u),X_{j}(u)),\qquad1\leq i\leq j\leq3.
	\]
	As a consequence of the system (\ref{sistemaEDO}), it is straightforward to check that $\phi_{ij}$ satisfy the
	following equations,
	\[
	\left\{
	\begin{array}
		[c]{l}%
		\dfrac{d\phi_{11}}{du}=2\okap_{1}\phi_{12},\smallskip\\
		\dfrac{d\phi_{12}}{du}=\okap_{1}\phi_{22}-\okap_{1}%
		\phi_{11}+\okap_{2}\phi_{13},\smallskip\\
		\dfrac{d\phi_{13}}{du}=\okap_{1}\phi_{23}-\okap_{2}%
		\phi_{12},\smallskip\\
		\dfrac{d\phi_{22}}{du}=-2\okap_{1}\phi_{12}+2\okap_{2}%
		\phi_{23},\smallskip\\
		\dfrac{d\phi_{23}}{du}=-\okap_{1}\phi_{13}+\okap_{2}%
		\phi_{33}-\okap_{2}\phi_{22},\smallskip\\
		\dfrac{d\phi_{33}}{du}=-2\okap_{2}\phi_{23},
	\end{array}
	\right.
	\]
	with initial conditions $\phi_{ij}(u_{0})=\hat{\delta}_{j}^{i}$. But the
	constant function $\hat{\phi}_{ij}(u)=\hat{\delta}_{j}^{i}$ also satisfies this system of differential equations with these initial conditions. By the uniqueness theorem of EDOs, it follows that $\phi_{ij}(u)=\hat{\delta}_{j}^{i}$, for
	all $u$. In addition, since $(  X_{1}(u_{0}),X_{2}(u_{0}),X_{3}	(u_{0}))  $ is an oriented positive basis, by continuity, so it is $(X_{1}(u),X_{2}(u),X_{3}(u))$ for any $u$. The frame determined by $X$ verifies
	that $X_{1}=Z$ and $\nabla_{\alpha^{\prime}}X_{1}=\okap_1 X_{2}$, so since $(
	X_{1},X_{2},X_{3})$ is positive, it is the Sannia basis of the ruled
	surface $\mathbf{X}\colon I\times\mathbb{R}\rightarrow (M,g)$. Finally, again from (\ref{sistemaEDO}), it follows that
	$\okap_{1}$ and $\okap_{2}$ are the Sannia invariants.
	
	We now prove the uniqueness of the ruled surface up to isometries. First, if
	$(M,g)$ is of constant curvature, given $p_{0},p_{0}^{\prime}\in M$ and
	$( e_{i})_{i=1}^3$, $(e'_{i})_{i=1}^3$ oriented orthonormal bases at $T_{p_{0}}M$ and
	$T_{p_{0}^{\prime}}M$ respectively, we consider a local isometry $\phi$
	sending $p_{0}$ to $p_{0}^{\prime}$ and $( e_{i} )_{i=1}^3$
	to $(e'_{i})_{i=1}^3$. Since
	$\phi$ preserves the Levi-Civita connection, the image $\phi\circ\mathbf{X}$
	of the ruled surface defined by $p_{0}$ and $( e_{i} )_{i=1}^3$
	is a ruled surface with the same parameters $\okap_{1}$,
	$\okap_{2}$. By the uniqueness of the system of differential
	equations \eqref{sistemaEDO}, we have: $\mathbf{X}^{\prime}=\phi\circ\mathbf{X}$.
	Conversely, if given $p_{0}$, $p_{0}^{\prime}\in M$ and orthonormal bases
	$( e_{i} )_{i=1}^3$, $(e'_{i})_{i=1}^3$
	at $T_{p_{0}}M$ and $T_{p_{0}^{\prime}}M$ respectively, there is a local isometry sending one to the other,
	the space is locally isotropic, and hence of constant curvature (\emph{cf.} \cite{KN}).
\end{proof}

\section{Central points and striction curves}
\label{Strictioncurves}

Euclidean ruled surfaces contain a special curve called \emph{striction curve} which is the locus of the points whose distance with respect to neighbouring geodesics is extremal. In particular striction curves are divided into \emph{expanding-contracting} and \emph{contracting-expanding}
curves depending on whether this distance corresponds to a maximum or a minimum value respectively. In general, the striction curve of a family of Euclidean curves is the locus for the points where the geodesic curvature of the corresponding orthogonal set of curves vanishes (read \cite{Muller1941,Muller1948} for more details). The striction curve of any Euclidean ruled surface is also characterised by the fact that the generators along it are parallel in the Levi-Civita sense. The following definition extends such a property to general ambient Riemannian manifolds:
\begin{definition}
	Let $\mathbf{X}\colon I\times\mathbb{R}\rightarrow (M,g)$ be a parametrized ruled surface. A curve $s\colon I \to \mathbf{X}(I\times \mathbb{R})$
    is said to be a \emph{striction curve} if
\begin{equation*}
	g(s^{\prime},\nabla_{s^{\prime}}\xv)=0. \label{strictionrelation}
\end{equation*}
\end{definition}

As already mentioned, whenever the striction curve $s\colon I \to \mathbf{X}(I\times \mathbb{R})$ exists, it can be taken as the base curve of the ruled surface so that it can be reparametrized as $\mathbf{X}(u,v)=\mathrm{exp}_{s(u)}(vZ(u))$.

\begin{proposition}
	\label{Sanniatangentvec}
	Let  $\mathbf{X}\colon I\times\mathbb{R}\rightarrow (M,g)$ be a Sannia ruled surface in a $3$-dimensional Riemannian manifold $\left(M,g\right)$ admitting a striction curve $s\colon I \to \mathbf{X}(I\times \mathbb{R})$ and chosen to be its base curve. For any $p=\mathbf{X}(u,0)$, the vectors $\{X_{1},X_{3}\}$ of the associated Sannia frame constitute an orthonormal basis of $T_{p}\,(\mathbf{X}(I\times\mathbb{R}))$ and the angle $\varphi$ defined by (\ref{basecurvedecomposition}) vanishes identically. Therefore
\begin{equation*}
	s^{\prime}(u)=\cos\sigma_{s(u)}\,X_{1}+\sin\sigma_{s(u)}\,X_{3} \quad \text{for all $u\in I$}.
\end{equation*}

\end{proposition}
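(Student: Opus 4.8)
The plan is to exploit the two key facts already established in the excerpt: Proposition \ref{Propjacdecomposition}, which gives the tangential/normal decomposition of the Jacobi field $\xu$ along a ruling, and the striction condition $g(s',\nabla_{s'}\xv)=0$. First I would observe that, since $s$ is chosen as the base curve, we have $\xu(u,0)=s'(u)$ and $\xv(u,0)=Z(u)$, and that the coordinate vector $\nabla_{\xv}\xu$ evaluated at $v=0$ equals $\nabla_{\alpha'}Z=\nabla_{s'}Z$ because $[\xu,\xv]=0$ forces $\nabla_{\xv}\xu=\nabla_{\xu}\xv$, and the latter at $v=0$ is the covariant derivative of $Z$ along $s$. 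Hence the striction condition reads $g(s',\nabla_{s'}Z)=0$ at every $u$, i.e. $g(X_1(u),\nabla_{s'}X_1(u))=0$ once we normalise with $Z=\kappa_0 X_1$ (and $\nabla_{s'}(\kappa_0 X_1)=\kappa_0'X_1+\kappa_0\kappa_1 X_2$ by formula (\ref{Sannia1}), whose $X_1$-component automatically vanishes, so the striction condition contributes nothing new here — the real content comes from the Jacobi decomposition below).

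The main step is then to feed the striction condition into Proposition \ref{Propjacdecomposition}. That proposition says $\xu = \Vert\alpha'\Vert\cos\sigma_p\,\xv + \xu^{\bot}$ with $\xu^{\bot}$ orthogonal to the ruling, where the coefficient $b=g((\nabla_{\xv}\xu)|_p,Z_p)$ was shown to be zero because $Z$ is unit. But more is true when $s$ is a striction curve: I would revisit the constant $b$ and show that in fact the $v$-independence arguments combined with the striction relation force $\nabla_{\alpha'}Z$ (equivalently $\nabla_{s'}X_1$, up to the scalar $\kappa_0$) to be orthogonal to $X_1$, which is automatic, and the key extra input is that the normal Jacobi field $\xu^{\bot}$ at $v=0$ equals $s'$ minus its $X_1$-component, i.e. $\xu^{\bot}(u,0)\perp X_1(u)$. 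The crux is identifying which unit vector in the plane $\{X_1\}^{\perp}\cap T_{\alpha(u)}M$ this normal component is: I claim it must be $X_3$. Indeed, $\nabla_{\alpha'}X_1=\kappa_1 X_2$ by (\ref{Sannia1}), so $X_2$ points in the direction of the "bending" of the ruling direction, and the striction condition $g(s',\nabla_{s'}Z)=0$ says precisely $g(s',X_2)=0$ (after absorbing $\kappa_0,\kappa_1,\kappa_0'$). Therefore in the decomposition (\ref{basecurvedecomposition}), $\alpha'/\Vert\alpha'\Vert = \cos\varphi\cos\theta\,X_1+\sin\varphi\,X_2+\cos\varphi\sin\theta\,X_3$, the coefficient $\sin\varphi = g(\alpha'/\Vert\alpha'\Vert, X_2)$ vanishes, which forces $\varphi\equiv 0$ (choosing the branch), and the decomposition collapses to $s'(u)/\Vert s'\Vert = \cos\theta\,X_1+\sin\theta\,X_3$. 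Since $\varphi=0$, Definition \ref{anglefunction} and the remark following (\ref{basecurvedecomposition}) identify $\theta$ with the base angle $\sigma_{s(u)}$, and (recalling the standard convention that the striction base curve is taken arc-length parametrised, so $\Vert s'\Vert=1$) we obtain $s'(u)=\cos\sigma_{s(u)}X_1+\sin\sigma_{s(u)}X_3$ as claimed, with $\{X_1,X_3\}$ an orthonormal pair spanning $T_p(\mathbf{X}(I\times\mathbb{R}))$ because $T_p\Sigma$ is spanned by $\xv=Z=\kappa_0 X_1$ and $\xu=s'$, both of which lie in $\mathrm{span}\{X_1,X_3\}$.

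The step I expect to be the main obstacle is the clean identification $g(s',X_2)=0 \iff$ striction, i.e. checking that the scalar factors relating $\nabla_{s'}Z$ to $X_2$ do not introduce a degenerate case: one must verify $\kappa_1>0$ (guaranteed by the Sannia hypothesis, Proposition \ref{PropSanniaFrame}) so that $X_2$ is genuinely the normalised direction of $\nabla_{s'}X_1$, and then expand $g(s',\nabla_{s'}Z) = g(s',\kappa_0'X_1+\kappa_0\kappa_1 X_2) = \kappa_0'\,g(s',X_1)+\kappa_0\kappa_1\,g(s',X_2)$. This is not yet obviously proportional to $g(s',X_2)$ alone, so care is needed: one should instead use that $g(s',X_1)=g(\xu,\xv)/\kappa_0$ is controlled and differentiate the striction relation, or — more simply — note that the Sannia frame is built from $Z$ and its derivative, and when the base is the striction curve the consistency of Proposition \ref{Propjacdecomposition} (which already gave $b=0$) together with orthogonality of $\xu^{\bot}$ pins down that the only surviving non-$X_1$ component of $s'$ is along $X_3$, not $X_2$. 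Once that is pinned down, everything else is a direct unwinding of definitions, so the argument is short.
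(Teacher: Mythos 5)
Your overall route is the paper's route: show that the striction condition forces $g(s',X_2)=0$, deduce $\varphi=0$, and conclude that $T_p\Sigma=\mathrm{span}\{X_1,X_3\}$ with $\theta=\sigma$. But the one step you yourself flag as the ``main obstacle'' is exactly the step you never close, and the fallback you offer does not close it. The appeal to ``consistency of Proposition \ref{Propjacdecomposition}'' cannot pin down that the non-$X_1$ part of $s'$ lies along $X_3$ rather than $X_2$: that proposition holds for \emph{every} base curve of \emph{every} parametrized ruled surface (striction or not), and it only splits $\xu$ into a multiple of $\xv$ plus something orthogonal to the ruling; it is completely blind to the $X_2$/$X_3$ distinction, so it cannot supply the missing information. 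Likewise your opening remark that the striction condition ``contributes nothing new'' (you rewrite it as $g(X_1,\nabla_{s'}X_1)=0$) is a misreading: the condition is $g(s',\nabla_{s'}Z)=0$, with $s'$ in the first slot, and it is the entire content of the proof.

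The correct resolution of your $\kappa_0'$ worry is the standing convention, already built into the statement (Definition \ref{anglefunction} and the displayed formula for $s'$ only make sense for a unit ruling, and all the striction results in Section \ref{Strictioncurves} assume $Z$ unitary): $\kappa_0\equiv 1$, so $\kappa_0'=0$ and $\nabla_{s'}Z=\kappa_1 X_2$ with $\kappa_1>0$ by the Sannia hypothesis. Then the striction condition is literally $\kappa_1\,g(s',X_2)=0$, i.e.\ $g(s',X_2)=0$, while $g(Z,X_2)=0$ holds because $\|Z\|$ is constant. This is the paper's one-line argument: $X_2$ is proportional to $\nabla_{s'}Z$, which is orthogonal to both $s'$ (striction) and $Z$ (unit norm), hence orthogonal to the tangent plane $\mathrm{span}\{s',Z\}$; consequently $\{X_1,X_3\}$ spans $T_p\Sigma$, $\sin\varphi=g(s',X_2)=0$, and (\ref{basecurvedecomposition}) collapses to $s'=\cos\sigma_{s(u)}X_1+\sin\sigma_{s(u)}X_3$. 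With that replacement for your hand-waved step, the rest of your write-up (the identification $\nabla_{\xv}\xu|_{v=0}=\nabla_{s'}Z$, $\varphi=0\Rightarrow\theta=\sigma$, and the spanning claim) is fine, so the fix is short but essential.
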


\begin{proof}
	The above relation holds since the tangent plane to  $\mathbf{X}\colon I\times\mathbb{R}\rightarrow (M,g)$ along $s$ is spanned by $s^{\prime}$ and $Z$, and
	the second vector $X_2=\nabla_{s'}Z/\vert\vert \nabla_{s'}Z \vert\vert$ of the corresponding Sannia frame is perpendicular to both of them.
\end{proof}

As mentioned above, in the Euclidean setting the ruling vector field of a ruled surface is parallel along the striction curve with respect to its induced Levi-Civita connection. This property also holds in a general Riemannian background as shown in the following result.
\begin{corollary}
	Under the hypotheses of Proposition \ref{Sanniatangentvec}, the ruling vector field $Z$ is parallel along the striction curve with respect to the induced connection on the Sannia ruled surface.
\end{corollary}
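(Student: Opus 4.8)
The plan is to work entirely in the tangent plane $T_p(\mathbf{X}(I\times\mathbb{R}))$ along $s$, which by Proposition \ref{Sanniatangentvec} is spanned by the orthonormal pair $\{X_1,X_3\}$, with $Z=\kappa_0 X_1$. Denote by $\nabla^\Sigma$ the induced Levi-Civita connection on the Sannia ruled surface. Since $Z=\kappa_0 X_1$ and $\kappa_0$ is a function along $s$ (indeed one may take $\kappa_0\equiv 1$), it suffices to show $\nabla^\Sigma_{s'}X_1$ is proportional to $X_1$; combined with the fact that $\nabla^\Sigma$ is metric and $\|X_1\|=1$, this forces $\nabla^\Sigma_{s'}X_1=0$, and then $\nabla^\Sigma_{s'}Z=(d\kappa_0/du)X_1$, which is parallel transport of $Z$ exactly when $\kappa_0$ is constant; with the normalization $\kappa_0\equiv 1$ one gets honest parallelism.

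First I would recall that the induced connection is the tangential projection of the ambient one: $\nabla^\Sigma_{s'}X_1 = (\nabla_{s'}X_1)^{\top}$, the component of $\nabla_{s'}X_1$ in $\mathrm{span}\{X_1,X_3\}$. By the Sannia structure equations (Proposition \ref{propSanniaderivatives}, formula (\ref{Sannia1})), $\nabla_{\alpha'}X_1=\kappa_1 X_2$, and here $\alpha=s$. But $X_2$ is precisely the unit normal $\xi$ to the surface along $s$ by the argument in Proposition \ref{Sanniatangentvec} (the second Sannia vector is perpendicular to both $s'$ and $Z$, hence normal to the tangent plane). Therefore $\nabla_{s'}X_1=\kappa_1 X_2$ is purely normal, so its tangential projection vanishes: $\nabla^\Sigma_{s'}X_1=0$.

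Next I would convert this to the statement about $Z$. Since $Z=\kappa_0 X_1$ along $s$,
\begin{equation*}
\nabla^\Sigma_{s'}Z = \frac{d\kappa_0}{du}\,X_1 + \kappa_0\,\nabla^\Sigma_{s'}X_1 = \frac{d\kappa_0}{du}\,X_1,
\end{equation*}
which lies in $\mathrm{span}\{Z\}$; choosing the parametrization with $\kappa_0\equiv 1$ (permissible by the remark following (\ref{basecurvdistribparam})) gives $\nabla^\Sigma_{s'}Z=0$, i.e.\ $Z$ is parallel along $s$ in the induced connection. Alternatively, without renormalizing, one notes that parallelism of the \emph{direction} field is what the classical statement asserts, and the ruling direction $X_1=Z/\|Z\|$ is genuinely $\nabla^\Sigma$-parallel.

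The only genuine point requiring care — rather than a true obstacle — is the identification $X_2=\pm\xi$ along $s$, i.e.\ that the second Sannia vector is normal to the surface. This is exactly the content already established in Proposition \ref{Sanniatangentvec}: the tangent plane along the striction base curve is $\mathrm{span}\{s',Z\}=\mathrm{span}\{X_1,X_3\}$, and since $(X_1,X_2,X_3)$ is orthonormal, $X_2$ is orthogonal to this plane, hence a unit normal. Once this is in hand the corollary follows immediately from the Sannia equation $\nabla_{s'}X_1=\kappa_1 X_2$ by projecting onto the tangent plane. I would also remark that this gives a clean conceptual reason why the striction curve is the distinguished one: it is the base curve along which the first Sannia vector has purely normal covariant derivative, equivalently along which $\varphi\equiv 0$.
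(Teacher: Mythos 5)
Your proof is correct and follows essentially the same route as the paper: both arguments rest on the observation from Proposition \ref{Sanniatangentvec} that $X_2$ is normal to the surface along $s$, so the tangential (induced) covariant derivative of the ruling vanishes --- the paper phrases this via the Gauss formula $\nabla_{s'}Z=\nabla^\Sigma_{s'}Z+\vec{h}(s',Z)$, while you equivalently use the Sannia equation $\nabla_{s'}X_1=\kappa_1 X_2$ and project onto the tangent plane. Your explicit handling of the factor $\kappa_0$ (reducing to the unit ruling $\kappa_0\equiv 1$) is a small extra precision that the paper leaves implicit.
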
	
\begin{proof}
	The Gauss identity on the striction curve reads
	\begin{equation*}
 		\nabla_{s'}Z =\nabla^{\Sigma}_{s'}Z+\vec{h}(s',Z),
    \end{equation*}
where $\nabla^{\Sigma}$ stands for the induced connection on the ruled surface $\Sigma\equiv \mathbf{X}(I\times\mathbb{R})$. By Proposition \ref{Sanniatangentvec}, $\nabla_{s'}Z$ is orthogonal to $\Sigma$, which means that $\nabla^{\Sigma}_{s'}Z=0$.
\end{proof}

\begin{remark}
    In the classical statement of the fundamental theorem or ruled surfaces
    in the Euclidean space $(\mathbb{R}^3,\delta)$, a striction curve can always be taken as the base curve in the parametrization for it always exists. By virtue of the above proposition, in such a context $\varphi=0$ and $\theta=\sigma$, i.e. the base angle on the striction curve.
	In particular, the distribution parameter formula (\ref{basecurvdistribparam}) on a general base curve reduces to
    \begin{equation*}
  	\lambda(u,0)=\frac{\sin{\sigma_{s(u)}}}{\kappa_1(u)},
    \end{equation*}
	which is the classical expression for the Euclidean distribution parameter (on a striction curve)
	\cite[Lemma 5.3.7]{PottmannWallner}.

\end{remark}

In the following result we obtain a condition for the striction curve in terms of the  coordinate basis $(\xu,\xv)$.

\begin{proposition}
	\label{criticalJac}
	Let $\mathbf{X}\colon I\times\mathbb{R}\rightarrow (M,g)$ be a
	parametrized ruled surface $\mathbf{X}\colon I\times\mathbb{R}\rightarrow (M,g)$ with $\alpha\colon I \to \mathbf{X}(I\times \mathbb{R})$ an associated base curve, and with a unit ruling vector field $Z\in \mathfrak{X}(\alpha)$. A curve $s\colon I \to \mathbf{X}(I\times \mathbb{R})$ on such a surface is a
	striction curve if and only if the squared norm of the vector field $\xu$ along each generator $\gamma_{Z(u)}(v)$
	is critical at the corresponding point $s(u)$, i.e.
	\begin{equation}
		\label{criticalJacobi}
		\xv(\Vert \xu\Vert ^{2})|_{p}=0,\quad\text{for all $p=s(u)$}.
	\end{equation}
	\end{proposition}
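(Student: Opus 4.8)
The plan is to compute the derivative $\xv(\Vert \xu\Vert^{2})$ directly and recognize that it equals (twice) the quantity $g(s',\nabla_{s'}\xv)$ appearing in the definition of a striction curve, once we move along the candidate curve $s$. First I would write $s(u)=\mathbf{X}(u,v(u))$ for some function $v\colon I\to\mathbb{R}$ determining the curve on the surface; then $s'=\xu+v'(u)\xv$ evaluated along $s$. Since $\nabla_{\xv}\xv=0$ and $[\xu,\xv]=0$ (so $\nabla_{\xu}\xv=\nabla_{\xv}\xu$), I compute
\begin{equation*}
	\nabla_{s'}\xv=\nabla_{\xu}\xv+v'(u)\nabla_{\xv}\xv=\nabla_{\xv}\xu.
\end{equation*}
Hence $g(s',\nabla_{s'}\xv)=g(\xu+v'\xv,\nabla_{\xv}\xu)=g(\xu,\nabla_{\xv}\xu)+v'\,g(\xv,\nabla_{\xv}\xu)$. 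The second term vanishes because $g(\xv,\nabla_{\xv}\xu)=g(\xv,\nabla_{\xu}\xv)=\tfrac12\xu(\Vert\xv\Vert^{2})=0$, using that $\xv$ is a unit geodesic field (as in Proposition \ref{Propcoordframeangle}). Therefore $g(s',\nabla_{s'}\xv)=g(\xu,\nabla_{\xv}\xu)=\tfrac12\xv(\Vert\xu\Vert^{2})$ evaluated at $p=s(u)$.

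From this identity both implications are immediate: the curve $s$ is a striction curve, i.e. $g(s',\nabla_{s'}\xv)|_{p}=0$ for all $p=s(u)$, if and only if $\xv(\Vert\xu\Vert^{2})|_{p}=0$ for all $p=s(u)$, which is exactly \eqref{criticalJacobi}. I would also note explicitly that $\xv(\Vert\xu\Vert^{2})$ is, for fixed $u$, the derivative along the generator $\gamma_{Z(u)}$ of the squared norm of the Jacobi field $\xu$, so the condition genuinely says that this squared norm is critical at $s(u)$ along each generator.

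The only real subtlety is making sure that the curve $s$ can indeed be written as $\mathbf{X}(u,v(u))$ with $u$ serving as a parameter; this is guaranteed since $s\colon I\to\mathbf{X}(I\times\mathbb{R})$ lies on the surface and meets each generator, so after reparametrization we may assume $s(u)=\mathbf{X}(u,v(u))$, and $s'=\xu+v'\xv$. Everything else is a short computation relying on $\nabla_{\xv}\xv=0$, $[\xu,\xv]=0$, and $\Vert Z\Vert=1$; I expect no genuine obstacle, the main point being the bookkeeping that the cross term $v'\,g(\xv,\nabla_{\xv}\xu)$ drops out so that the striction condition is independent of the tangential speed $v'$ along the generator.
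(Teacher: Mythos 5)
Your proposal is correct and follows essentially the same route as the paper: writing $s(u)=\mathbf{X}(u,v(u))$, so $s'=\xu+v'\xv$, and using $\nabla_{\xv}\xv=0$, $[\xu,\xv]=0$ and $\Vert\xv\Vert=1$ to kill the cross term and obtain $g(s',\nabla_{s'}\xv)=\tfrac12\,\xv(\Vert\xu\Vert^{2})$, from which the equivalence is immediate. No gaps.
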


\begin{proof}
Given any curve of the form $s\colon I\rightarrow \mathbf{X}(I\times \mathbb{R})$, $s(u)=\mathbf{X}(u,v(u))$, where $v\colon I \rightarrow \mathbb{R}$ is smooth, we have $s^{\prime}=\xu+v^{\prime}\xv \label{tanstrict}$. Then
\begin{eqnarray*}
g(s^{\prime} ,\nabla _{s^{\prime}}\xv)&=& g (\xu+v^{\prime}\xv, \nabla _{\xu+v^{\prime}\xv} \xv)= g (\xu+v^{\prime}\xv, \nabla _{\xu} \xv )\\
&=& g (\xu, \nabla _{\xu} \xv ) = g (\xu, \nabla _{\xv} \xu )
= \tfrac{1}{2}\xv(\Vert \xu\Vert ^{2}),
\end{eqnarray*}
where we have taken into account that $g(\xv,\xv)$ is constantly $1$, and that $\nabla _{\xu}\xv-\nabla _{\xv}\xu=[\xu,\xv]=0$. The proof is complete by the definition of striction curve.
\end{proof}

\begin{remark}
	Since the length of the Jacobi vector field $\xu$ gives a local idea of
	the deviation of a congruence of geodesics,
	relation (\ref{criticalJacobi}) is in accordance with the
	definition of striction line established in a Euclidean sense, where
	any point in these curves has a critical distance with respect to neighbouring geodesics, as mentioned before.
\end{remark}

The previous result motivates the following definition.
\begin{definition}
	Let $\mathbf{X}\colon I\times\mathbb{R}\rightarrow (M,g)$ be a parametrized ruled surface in a $3$-dimensional Riemannian manifold $(M,g)$. The function
	that describes the derivative of the squared norm of the Jacobi field
	\begin{equation}
		\label{Jacobifun}
		F\colon \mathbf{X}(I\times \mathbb{R}) \rightarrow \mathbb{R}, \quad F(p)=\tfrac{1}{2}(\xv \Vert\xu\Vert^2)|_p
	\end{equation}
	will be referred to as the \emph{Jacobi evolution function} of the ruled surface. The points of the parametrized ruled surface where $F$ vanishes are called \emph{central points}.
\end{definition}
\begin{remark}
	\label{Fstrictioncondition}
	Therefore, striction curves lie on the vanishing set of the function $F$.
Furthermore, the function $F$ describes the evolution of the norm of the Jacobi variational field $\xu$ along each ruling.
The study of the sign of $F$ provides relevant information with regard to the behavior of the rulings in a local manner.
If $F$ is strictly positive at a point, the norm of the Jacobi is increasing and hence the rulings are locally spreading apart.
Just the opposite happens in case that $F$ is strictly negative at some point. Since $(u,v)$ is a system of coordinates associated with the parametrization
(\ref{rulparam}) of the ruled surface, we will
simply refer to $(F\circ \mathbf{X})(u,v)$ as $F(u,v)$ for the sake of simplicity.
\end{remark}

We next explore the behaviour of the first and second derivatives of the function $F$.

\begin{theorem}
	\label{ThFderivatives}	
	Let $\mathbf{X}\colon I\times\mathbb{R}\rightarrow (M,g)$ be a parametrized ruled surface in a Riemannian $3$-manifold $(M,g)$. The first and second derivatives of the
	Jacobi evolution function $F$ are
	\begin{eqnarray}
		&& \frac{\partial F}{\partial v}=-\Riem(\xv,\xu,\xv,\xu)+\Vert\nabla_{\xv} \xu\Vert^2, \label{Jacfirstder} \\
				&&\frac{\partial^2 F}{\partial \, v^2}=
		g\left(\left(\con_{\xv}R\right)(\xv,\xu,\xv),\xu\right)	
		-4\Riem(\xv,\xu,\xv,\conv \xu). \label{Jacsecondder}
		\end{eqnarray}
	\end{theorem}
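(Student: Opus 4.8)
The plan is to compute $\partial F/\partial v = \tfrac12\,\xv\!\left(\xv\,\Vert\xu\Vert^2\right)$ directly, differentiating twice along the geodesic field $\xv$, and then differentiate once more for the second derivative. Throughout I would exploit the three structural facts already available: $\nabla_{\xv}\xv=0$ (the $v$-curves are geodesics), $[\xu,\xv]=0$ (coordinate fields), hence $\nabla_{\xv}\xu=\nabla_{\xu}\xv$, and the Jacobi equation \eqref{Jacobeq}, $\conv\conv\xu=-R(\xu,\xv)\xv$. Metric compatibility gives $\tfrac12\,\xv(\Vert\xu\Vert^2)=g(\conv\xu,\xu)$, so $F=g(\conv\xu,\xu)$ and

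\begin{equation*}
\frac{\partial F}{\partial v}=\xv\,g(\conv\xu,\xu)=g(\conv\conv\xu,\xu)+g(\conv\xu,\conv\xu)
=-g(R(\xu,\xv)\xv,\xu)+\Vert\conv\xu\Vert^2.
\end{equation*}

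Since $\Riem(\xv,\xu,\xv,\xu)=-g(R(\xv,\xu)\xv,\xu)=g(R(\xu,\xv)\xv,\xu)$ by the antisymmetry in the first two slots, this is exactly \eqref{Jacfirstder}.

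For the second derivative I would differentiate \eqref{Jacfirstder} along $\xv$ term by term. The curvature term: writing $\Riem(\xv,\xu,\xv,\xu)=-g(R(\xv,\xu)\xv,\xu)$, the derivative $\xv$ hits the tensor $R$ (producing the covariant-derivative term $g((\conv R)(\xv,\xu,\xv),\xu)$, using $\nabla_{\xv}\xv=0$ so the $\xv$-slots contribute nothing) and hits each of the two $\xu$-arguments, each giving a copy of $-g(R(\xv,\conv\xu)\xv,\xu)=\Riem(\xv,\conv\xu,\xv,\xu)$ — but written against the fourth slot we get, via the symmetries of $\Riem$, two terms equal to $\Riem(\xv,\xu,\xv,\conv\xu)$. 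The second term: $\xv\,\Vert\conv\xu\Vert^2=2\,g(\conv\conv\xu,\conv\xu)=-2\,g(R(\xu,\xv)\xv,\conv\xu)$ by the Jacobi equation, and $-g(R(\xu,\xv)\xv,\conv\xu)=\Riem(\xv,\xu,\xv,\conv\xu)$ after the same symmetry bookkeeping (pair symmetry plus antisymmetry), contributing another $2\,\Riem(\xv,\xu,\xv,\conv\xu)$. Adding the $2$ from the curvature term and the $2$ from the norm term with the correct signs yields the coefficient $-4$ in \eqref{Jacsecondder}.

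The main obstacle is purely bookkeeping: getting the sign and the placement of the covariant derivative $\conv\xu$ consistently into the standard slot position of $\Riem(\xv,\xu,\xv,\cdot)$. Each contribution naturally appears in a different slot (third or fourth argument of $R$, or inside $g(R(\cdot,\cdot)\cdot,\cdot)$ in various configurations), and one must repeatedly invoke the pair symmetry $\Riem(A,B,C,D)=\Riem(C,D,A,B)$, the antisymmetry in the first and in the last pair, and the definition $\Riem(X,Y,Z,T)=-g(R(X,Y)Z,T)$ to bring everything to the common form; a single mishandled sign changes $-4$ to $-2$ or $0$. A secondary point to check is that differentiating $R$ as a $(1,3)$-tensor along $\xv$ and then lowering an index commutes with the metric (it does, by $\nabla g=0$), so that $\xv\big(g(R(\xv,\xu)\xv,\xu)\big)$ splits cleanly into the $(\conv R)$-term plus the three terms where $\xv$ differentiates an argument, with the two $\xv$-arguments killed by $\nabla_{\xv}\xv=0$.
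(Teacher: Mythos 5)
Your proposal follows essentially the same route as the paper's proof: write $F=g(\conv\xu,\xu)$, differentiate along $\xv$ using metric compatibility and the Jacobi equation to get \eqref{Jacfirstder}, then differentiate once more, with $\conv\xv=0$ isolating the $(\conv R)$-term and the pair/antisymmetries of the curvature tensor collecting the remaining three terms into the coefficient $-4$ of \eqref{Jacsecondder}. One small caveat on the bookkeeping you flag yourself: with the paper's convention $\Riem(X,Y,Z,T)=-g(R(X,Y)Z,T)$ one has $-g(R(\xu,\xv)\xv,\conv\xu)=-\Riem(\xv,\xu,\xv,\conv\xu)$ rather than $+\Riem(\xv,\xu,\xv,\conv\xu)$, so a couple of your intermediate identities carry the wrong sign even though the final coefficient $-4$ and the overall argument are correct.
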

\begin{proof}
	If we differentiate $F$ along the $\xv$-direction, we obtain
	\begin{equation*}
		\frac{\partial F}{\partial v}=\conv g(\conv \xu,\xu)=
		g(\conv\conv \xu,\xu)+g(\conv \xu,\conv \xu).
	\end{equation*}
	As $\xu$ is a Jacobi vector field along the rulings, it satisfies (\ref{Jacobeq}) so that
	\begin{eqnarray*}
		\frac{\partial F}{\partial v}&=&-g(R(\xu,\xv)\xv,\xu)+\Vert\conv \xu\Vert^2  \nonumber \\
		&=&-\Riem(\xv,\xu,\xv,\xu)+\Vert\nabla_{\xv} \xu\Vert^2, \label{firtderprov}
    \end{eqnarray*}
	and
	\begin{eqnarray}
		\frac{\partial^2 F}{\partial \, v^2}&=&g\left(\conv(R(\xv,\xu)\xv),\xu\right)+g(R(\xv,\xu)\xv),\conv \xu) \label{secondderprov} \\
		&&+2g(\conv \conv \xu,\conv \xu)  \nonumber \\
				&=&g\left(\conv(R(\xv,\xu)\xv),\xu\right)+3g(R(\xv,\xu)\xv,\conv \xu).	 \nonumber		
	\end{eqnarray}
	On the other hand, taking into account the symmetries of the Riemann tensor and the fact that $\xv$ is a geodesic vector field we obtain
	\begin{eqnarray}	
		g(\conv(R(\xv,\xu)\xv),\xu)&=&g((\conv R)(\xv,\xu,\xv),\xu ) \nonumber \\
		                           & &+g(R(\xv,\xu)\xv,\conv \xu). \label{derRiemann}
	\end{eqnarray}	
	Plugging (\ref{derRiemann}) into (\ref{secondderprov}) finally gives (\ref{Jacsecondder}), which finally concludes the proof of the theorem.
\end{proof}

As a consequence, in the special case where the ambient manifold has constant curvature the above derivatives of $F$ read as follows.
\begin{corollary}
	\label{corJacderspaceform}
	Let $(M(k),g)$ be a Riemannian $3$-manifold of constant sectional curvature $k$ and  $\mathbf{X}\colon I\times\mathbb{R}\rightarrow (M,g)$ be a parametrized ruled surface with $Z$ a unit vector field. Then the first and second derivatives of the Jacobi evolution function $F$ read
	\begin{eqnarray}
		\label{Jacfirstspaceform}
		\frac{\partial F}{\partial v}&=&-k(\Vert\xu\Vert^2-\Vert\alpha^\prime(u)\Vert^2\cos^2\sigma_p)+\Vert\conv \xu\Vert^2,
\end{eqnarray}
and
\begin{eqnarray}
		\frac{\partial^2 F}{\partial \, v^2}&=& (-2k)\xv(\Vert\xu\Vert^2)=-4kF(u,v),
\label{Jacsecondspaceform}
	\end{eqnarray}
	where $p=\alpha(u)=\mathbf{X}(u,0)$ and $\sigma_p$ is the base angle between $\alpha^\prime(u)$ and $Z$ at $p$.
\end{corollary}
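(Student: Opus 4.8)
The plan is to feed the constant-curvature form of the Riemann tensor into the two formulas of Theorem~\ref{ThFderivatives} and then collapse the resulting metric contractions using the frame identities already recorded for $(\xu,\xv)$ when the ruling $Z$ is unit. First I would note that on a $3$-manifold of constant curvature $k$ the curvature operator is $R(X,Y)W=k\bigl(g(Y,W)X-g(X,W)Y\bigr)$, so that, with the sign convention $\Riem(X,Y,W,T)=-g(R(X,Y)W,T)$ used in the paper,
\[
\Riem(X,Y,W,T)=k\bigl(g(X,W)g(Y,T)-g(X,T)g(Y,W)\bigr),\qquad \con R\equiv 0 .
\]
Moreover, since $Z$ is unit and $\conv\xv=0$, the norm $\|\xv\|$ is constant along each ruling and equal to $\|Z\|=1$, so $\xv$ is a genuine unit geodesic field along every generator (and $\|\xv\|^2\equiv 1$ on the whole image); this is exactly where the hypothesis that $Z$ be unit enters.

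For the first derivative I would substitute the above into (\ref{Jacfirstder}): the curvature term becomes $\Riem(\xv,\xu,\xv,\xu)=k\bigl(\|\xv\|^2\|\xu\|^2-g(\xu,\xv)^2\bigr)=k\bigl(\|\xu\|^2-g(\xu,\xv)^2\bigr)$, and by Proposition~\ref{Propcoordframeangle} the quantity $g(\xu,\xv)$ is constant along each ruling and equals $\|\alpha'(u)\|\cos\sigma_p$ there, hence $g(\xu,\xv)^2=\|\alpha'(u)\|^2\cos^2\sigma_p$. Plugging this in gives (\ref{Jacfirstspaceform}) at once.

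For the second derivative, the identity $\con R\equiv 0$ annihilates the first term of (\ref{Jacsecondder}). In the remaining term I would expand, using the constant-curvature expression,
\[
\Riem(\xv,\xu,\xv,\conv\xu)=k\bigl(g(\xu,\conv\xu)-g(\xu,\xv)\,g(\xv,\conv\xu)\bigr).
\]
Since $[\xu,\xv]=0$ one has $g(\xv,\conv\xu)=g(\xv,\nabla_{\xu}\xv)=\tfrac12\xu(\|\xv\|^2)=0$, while $g(\xu,\conv\xu)=\tfrac12\xv(\|\xu\|^2)=F$, whence $\Riem(\xv,\xu,\xv,\conv\xu)=kF$. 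Thus (\ref{Jacsecondder}) reduces to $\partial^2 F/\partial v^2=-4kF=-2k\,\xv(\|\xu\|^2)$, which is (\ref{Jacsecondspaceform}). Since every step is a direct substitution into results already established, there is no genuine obstacle here: the only points requiring care are bookkeeping the symmetries and sign convention of $\Riem$ and remembering that $\xv$ is unit along the whole ruling, not merely at $v=0$.
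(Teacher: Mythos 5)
Your proof is correct and follows the paper's overall strategy: substitute the constant-curvature form of the curvature tensor into Theorem \ref{ThFderivatives}. The treatment of $\partial F/\partial v$ is identical to the paper's (constant-curvature sectional formula plus the constancy of $g(\xu,\xv)$ from Proposition \ref{Propcoordframeangle}). For $\partial^2 F/\partial v^2$ you take a slightly more direct route: you expand $\Riem(\xv,\xu,\xv,\conv \xu)=k\bigl(g(\xu,\conv \xu)-g(\xu,\xv)\,g(\xv,\conv \xu)\bigr)$ and kill the second term via $g(\xv,\conv \xu)=g(\xv,\nabla_{\xu}\xv)=\tfrac12\xu(\Vert\xv\Vert^2)=0$, whereas the paper first writes $R(\xv,\xu)\xv=-k\,\xu^{\bot}$ and then invokes the decomposition of Proposition \ref{Propjacdecomposition} to identify $g(\xu^{\bot},\conv \xu)$ with $\tfrac12\xv(\Vert\xu\Vert^2)$. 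The two computations are equivalent; yours bypasses the orthogonal Jacobi component and one auxiliary lemma, while the paper's version highlights the role of $\xu^{\bot}$. In both cases the ingredients ($[\xu,\xv]=0$, $\xv$ geodesic and of unit norm along the whole ruling, $\con R=0$, and the paper's sign convention for $\Riem$) are correctly handled.
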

\begin{proof}
	By the sectional curvature relation for a space form, we have
	\begin{equation}
		\label{seccurvature}
		\Riem(\xv,\xu,\xv,\xu)=k\left(\Vert\xu\Vert^2-g(\xu,\xv)^2 \right),
	\end{equation}	
	since $\xv$ is a unit vector field. On the other hand, we know by Proposition \ref{Propcoordframeangle} that $g(\xu,\xv)=\Vert\alpha^\prime(u)\Vert\cos{\sigma_p}$ is constant along the associated ruling. Hence, (\ref{seccurvature}) can be rewritten as
	\begin{equation}
		\label{seccurvaturetwo}
		\Riem(\xv,\xu,\xv,\xu)=k\left(\Vert\xu\Vert^2-\Vert\alpha^\prime(u)\Vert^2 \cos^2 \sigma_p \right),
	\end{equation}
	so (\ref{Jacfirstder}) becomes (\ref{Jacfirstspaceform}) when relation (\ref{seccurvaturetwo}) is inserted.
	Let us derive now relation (\ref{Jacsecondspaceform}). By virtue of formula (\ref{Jacsecondder}), the second derivative of the Jacobi evolution function $F$ in a space form becomes
	\begin{equation*}
		\frac{\partial^2 F}{\partial \, v^2}=-4\,\Riem(\xv,\xu,\xv,\conv \xu)=4\,g(R(\xv,\xu)\xv,\nabla_{\xv}\xu),
	\end{equation*}
	for $\nabla R=0$ holds. Since
	\begin{equation*}
		R(\xv,\xu)\xv=R(\xv,\xu^{\bot})\xv=-k \xu^{\bot},
	\end{equation*}
	where the last equality is fulfilled as a consequence of the ambient manifold being a space form, we have
		\begin{equation}
		\label{nearlysecondderiv}
		\frac{\partial^2 F(u,v)}{\partial v^2}=(-4k) g(\xu^{\bot},\nabla_{\xv}\xu).
	\end{equation}
	Using the splitting given in Proposition \ref{Propjacdecomposition}  for the Jacobi field $\xu$, we obtain
	\begin{eqnarray}
		\label{splittingterm}
		g(\xu^{\bot},\nabla_{\xv}\xu)&=&g(\xu,\nabla_{\xv}\xu)-\Vert\alpha^\prime(u)\Vert(\cos{\sigma_p})g(\xv,\nabla_{\xv}\xu)\\ &=&\frac{1}{2}\xv(\Vert\xu\Vert^2), \nonumber
	\end{eqnarray}
    with $\sigma_p$ the base angle at $p=\alpha(u)$, where $[\xu,\xv]=0$ has been applied in combination with the fact that
	$\xv$ is geodesic. Inserting (\ref{splittingterm}) into (\ref{nearlysecondderiv}) finally gives (\ref{Jacsecondspaceform}).
\end{proof}

In the next result we integrate the differential equation  (\ref{Jacsecondspaceform}) in the different models for positive, zero or negative $k$.
\begin{theorem}
	 Let  $\mathbf{X}\colon I\times\mathbb{R}\rightarrow (M,g)$ be a parametrized ruled surface in a complete manifold $(M,g)$ of constant sectional curvature $k$, with $Z$ a unit ruling vector field.  Then the expression of the Jacobi evolution function $F$ is
	\begin{eqnarray}
		\label{Jacobifuncsphere}
		F(u,v)&=& C_1 \cos{(\sqrt{4k}v)}+C_2 \sin{(\sqrt{4k}v)} , \quad \text{if $k>0$}, \\
		\label{Jacobifunceuclid}
		F(u,v)&=&C_1 + C_2 v, \quad \text{if $k=0$} \\
		\label{Jacobifunchyperbolic}
		F(u,v)&=&C_1 \cosh{(\sqrt{-4k}v)}+C_2\sinh{(\sqrt{-4k}v)}, \quad \text{if $k<0$}
	\end{eqnarray}
	where
\begin{equation}
			\label{C1}
			C_1=g_p(\nabla_{\alpha^\prime}Z,\alpha^\prime),
\end{equation}
and
\begin{eqnarray}
		\label{Cspherehyperbolic}
		C_2&=&\tfrac{1}{\sqrt{|4k|}}\left( -k\Vert\alpha^\prime\Vert_p^2\sin^2{\sigma_p}+\Vert\nabla_{\alpha^\prime}Z\Vert_p^2\right) , \quad \text{if $k>0$ or $k<0$}, \\
		\label{Ceuclid}
		 C_2&=&\Vert\nabla_{\alpha^\prime}Z\Vert^2_p,   \quad  \text{if $k=0$}.
\end{eqnarray}

\end{theorem}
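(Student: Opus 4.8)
The statement is obtained by solving the linear ODE \eqref{Jacsecondspaceform} in each regime of $k$ and then fixing the two integration constants from initial data at $v=0$. First I would note that $F$ restricted to a fixed ruling $\gamma_{Z(u)}$ is a smooth function $v\mapsto F(u,v)$ satisfying the second-order linear homogeneous ODE $\partial^2 F/\partial v^2 = -4kF$. The general solution of this equation is exactly the expression on the right-hand sides of \eqref{Jacobifuncsphere}, \eqref{Jacobifunceuclid} and \eqref{Jacobifunchyperbolic}, according to the sign of $k$ (trigonometric for $k>0$, affine for $k=0$, hyperbolic for $k<0$), with $C_1,C_2$ a priori depending on the ruling parameter $u$. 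So the content of the theorem reduces to identifying $C_1$ and $C_2$ with the claimed geometric quantities.

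Next I would evaluate at $v=0$, where the ruling passes through the base point $p=\alpha(u)$ with $\xv|_{v=0}=Z$ and $\xu|_{v=0}=\alpha'(u)$. From the definition \eqref{Jacobifun}, $F(u,0)=\tfrac{1}{2}\xv(\Vert\xu\Vert^2)|_p = g_p(\nabla_{\xv}\xu,\xu)|_{v=0}$; using $\nabla_{\xv}\xu=\nabla_{\xu}\xv$ (since $[\xu,\xv]=0$) and evaluating at the base curve gives $F(u,0)=g_p(\nabla_{\alpha'}Z,\alpha')$, which is precisely $C_1$ as in \eqref{C1} — for all three cases, since each general solution satisfies $F(u,0)=C_1$. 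To get $C_2$ I would differentiate: in the Euclidean case $\partial F/\partial v|_{v=0}=C_2$, and by \eqref{Jacfirstspaceform} with $k=0$ this equals $\Vert\nabla_{\xv}\xu\Vert^2|_{v=0}=\Vert\nabla_{\alpha'}Z\Vert_p^2$, giving \eqref{Ceuclid}. For $k>0$ the solution has $\partial F/\partial v|_{v=0}=\sqrt{4k}\,C_2$, and for $k<0$ it has $\partial F/\partial v|_{v=0}=\sqrt{-4k}\,C_2$; in both cases $\partial F/\partial v|_{v=0}=-k(\Vert\alpha'\Vert_p^2-\Vert\alpha'\Vert_p^2\cos^2\sigma_p)+\Vert\nabla_{\alpha'}Z\Vert_p^2 = -k\Vert\alpha'\Vert_p^2\sin^2\sigma_p+\Vert\nabla_{\alpha'}Z\Vert_p^2$ by \eqref{Jacfirstspaceform}, and dividing by $\sqrt{|4k|}$ yields \eqref{Cspherehyperbolic}.

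I would close by remarking that completeness of $(M,g)$ guarantees each geodesic $\gamma_{Z(u)}$, and hence the Jacobi field $\xu$ along it, is defined for all $v\in\mathbb{R}$, so the solution formula is valid on the whole real line rather than just a neighborhood of $0$. There is no real obstacle here: the only mild point to keep straight is the bookkeeping of the factor $\sqrt{|4k|}$ coming from differentiating $\cos(\sqrt{4k}\,v)$, $\sinh(\sqrt{-4k}\,v)$, etc., and the consistent use of Corollary \ref{corJacderspaceform} to rewrite the first derivative at $v=0$ in terms of $\sigma_p$, $\Vert\alpha'\Vert_p$ and $\Vert\nabla_{\alpha'}Z\Vert_p$. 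Everything else is the standard solution of a constant-coefficient linear ODE with prescribed value and first derivative at one point.
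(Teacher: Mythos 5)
Your proposal is correct and follows essentially the same route as the paper: the general solution of the constant-coefficient ODE \eqref{Jacsecondspaceform} from Corollary \ref{corJacderspaceform} is taken in each curvature regime, $C_1$ is fixed by evaluating $F(u,0)=g_p(\nabla_{\alpha'}Z,\alpha')$, and $C_2$ is fixed from $\partial F/\partial v|_{v=0}$ via \eqref{Jacfirstspaceform}, with the same $\sqrt{|4k|}$ bookkeeping. No discrepancies to report.
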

\begin{proof}
	Evaluating the Jacobi evolution function $F(u,v)$ at $v=0$ gives
	\begin{equation*}
		F(u,0)=\tfrac{1}{2}\xv(\Vert\xu\Vert^2)|_p=g_p(\nabla_{\xv}\xu,\xu)=g_p(\nabla_{\xu}\xv,\xu)=g_p(\nabla_{\alpha^\prime}Z,\alpha^\prime),
	\end{equation*}
	which implies  (\ref{C1}), after evaluating  (\ref{Jacobifuncsphere}),  (\ref{Jacobifunceuclid}) and  (\ref{Jacobifunchyperbolic}), at $v=0$. To compute $C_2$, we use the Jacobi first derivative relation (\ref{Jacfirstspaceform}), which becomes
	\begin{equation}
		\label{Fderivativebasecurve}
		\frac{\partial F}{\partial v}(u,0)=-k\Vert\alpha^\prime(u)\Vert_p^2\sin^2{\sigma_p}+\Vert\nabla_{\alpha^\prime} Z\Vert^2_p
	\end{equation}
	since $\xu(u,0)=\alpha^\prime(u)$. Differentiating (\ref{Jacobifuncsphere}) at $v=0$, from (\ref{Fderivativebasecurve}), we obtain
	\begin{equation*}
		C_2 \sqrt{4k}=-k\Vert\alpha^\prime(u)\Vert^2\sin^2{\sigma_p}+\Vert\nabla_{\alpha^\prime} Z\Vert^2_p,
	\end{equation*}
	which is none other than $C_2$ in (\ref{Cspherehyperbolic}) for $k>0$. The respective value of $C_2$ for $k<0$ is obtained in an analogous way differentiating  (\ref{Jacobifunchyperbolic})  at $v=0$.
	Likewise, in the Euclidean context $k=0$, $(\partial F/\partial v)(0)=\Vert\nabla_{\alpha^\prime} Z\Vert^2_p$, which implies the value of $C_2$ in (\ref{Ceuclid}) for the Jacobi evolution function is $F(u,v)=C_1+C_2 v$  in the flat case.
\end{proof}

\begin{theorem}
	Let $\mathbf{X}\colon I\times\mathbb{R}\rightarrow (M,g)$ be a
	Sannia ruled surface $\mathbf{X}\colon I\times\mathbb{R}\rightarrow (M,g)$ in a complete manifold $(M,g)$ of constant sectional curvature $k$, where $\alpha\colon I \to \mathbf{X}(I\times \mathbb{R})$ is an associated base curve, and with a unit ruling vector field $Z\in \mathfrak{X}(\alpha)$. If $s\colon I \to \mathbf{X}(I\times \mathbb{R})$, $s(u)=\mathbf{X}(u,v(u))$ is an striction curve, then
	\begin{eqnarray}
		\label{strictionsphere}
		v(u)&=&\frac{1}{\sqrt{4k}}\arctan{\left(\frac{-\sqrt{4k}\,g_p(\nabla_{\alpha^\prime}Z,\alpha^\prime)}{-k\Vert\alpha^\prime\Vert_p^2\sin^2{\sigma_p}+\Vert\nabla_{\alpha^\prime} Z\Vert^2_p}\right)} \quad \text{if $k>0$}, \\
		\label{strictioneuclid}
		v(u)&=& -\frac{g_p(\nabla_{\alpha^\prime}Z,\alpha^\prime)}{\Vert\nabla_{\alpha^\prime}Z\Vert^2_p}  \quad \text{if $k=0$},  \\
		\label{strictionhyperbolic}
		 v(u)&=&\frac{1}{\sqrt{-4k}}\mathrm{arctanh}{\left(\frac{-\sqrt{-4k}\,g_p(\nabla_{\alpha^\prime}Z,\alpha^\prime)}{-k\Vert\alpha^\prime\Vert_p^2\sin^2{\sigma_p}+\Vert\nabla_{\alpha^\prime} Z\Vert^2_p}\right)} \quad \text{if $k<0$},
	\end{eqnarray}
	where $\sigma_p$ is the angle between the vectors $\alpha^\prime$ and $Z$ at $p=\alpha(u)$. In addition, if $M$ is simply connected (that is, $M=S^3$ for $k>0$, $M=\mathbb{R}^3$ for $k=0$ or $M=\mathbb{H}^3$ for $k<0$ with their standard metrics scaled by $1/\sqrt{|k|}$ when  $k\neq 0$), then for $k\geq 0$, every Sannia surface has a unique striction curve. For $k<0$, every Sannia surface has at most one striction curve.
\end{theorem}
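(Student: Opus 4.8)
The plan is to reduce the entire statement to an analysis of the zero set of the Jacobi evolution function $F$, whose closed form on a space form is already recorded in \eqref{Jacobifuncsphere}--\eqref{Jacobifunchyperbolic}. By Proposition~\ref{criticalJac} and the definition \eqref{Jacobifun} of $F$, a curve $s(u)=\mathbf{X}(u,v(u))$ with $v\colon I\to\mathbb{R}$ smooth is a striction curve precisely when $F(u,v(u))=0$ for every $u\in I$; so everything comes down to describing, for each $u$, the set $\{v\in\mathbb{R}\colon F(u,v)=0\}$ and to deciding for which Sannia surfaces a smooth selection $u\mapsto v(u)$ exists and is unique. To obtain the three formulas I would substitute the respective expressions for $F$ into $F(u,v)=0$ and solve for $v$: in the flat case $F=C_1+C_2v$ gives $v=-C_1/C_2$, hence \eqref{strictioneuclid} after inserting \eqref{C1} and \eqref{Ceuclid}; for $k>0$, solving $C_1\cos(\sqrt{4k}v)+C_2\sin(\sqrt{4k}v)=0$ on the principal branch of $\arctan$ gives \eqref{strictionsphere}; for $k<0$, rewriting $F=C_2\cosh(\sqrt{-4k}v)\bigl(\tanh(\sqrt{-4k}v)+C_1/C_2\bigr)$ and inverting $\tanh$ gives \eqref{strictionhyperbolic}, which is meaningful exactly when $|C_1/C_2|<1$. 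The Sannia hypothesis already enters here, since it forces $\nabla_{\alpha'}Z\ne 0$ along $\alpha$, so in the flat case $C_2=\Vert\nabla_{\alpha'}Z\Vert_p^2>0$ and the division is legitimate.

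For the existence and uniqueness part I would dispatch $k\le 0$ first, as it is clean. When $k=0$, $F(u,\cdot)$ is affine with slope $C_2(u)=\Vert\nabla_{\alpha'}Z\Vert_p^2>0$, so it has exactly one root $v(u)=-C_1(u)/C_2(u)$, smooth in $u$; then $s(u)=\mathbf{X}(u,v(u))$ is the unique striction curve, and it is regular because $s'=\xu+v'\xv$ with $\mathrm{rk}\,d\mathbf{X}=2$. When $k<0$, I would first note that $-k>0$ together with $\nabla_{\alpha'}Z\ne 0$ makes $C_2(u)=\tfrac{1}{\sqrt{-4k}}\bigl(-k\Vert\alpha'\Vert_p^2\sin^2\sigma_p+\Vert\nabla_{\alpha'}Z\Vert_p^2\bigr)>0$ for every $u$, so $F(u,\cdot)$ is never identically zero; then the factorization above and the bijectivity of $\tanh\colon\mathbb{R}\to(-1,1)$ show that the ruling through $\alpha(u)$ carries a central point if and only if $|C_1(u)|<|C_2(u)|$, and in that case exactly one. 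Hence a striction curve exists if and only if $|C_1(u)|<|C_2(u)|$ for all $u\in I$, in which case it is the single curve \eqref{strictionhyperbolic}, and in every case there is at most one. This also yields the advertised examples in $\mathbb{H}^3$ of Sannia surfaces without striction curve: any one for which $|C_1|\ge|C_2|$ at a single parameter.

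The case $k>0$ is where the real work lies, and I expect it to be the main obstacle. Now $F(u,\cdot)=C_1\cos(\sqrt{4k}v)+C_2\sin(\sqrt{4k}v)$ is a sinusoid of period $\pi/\sqrt{k}$ in $v$, whereas the great-circle ruling $\gamma_{Z(u)}$ closes up with period $2\pi/\sqrt{k}$; so one must pass to the quotient by the periodicity of the rulings and show that the zero set of $F$ on the surface organizes into a single striction curve. Two points need care. First, one has to exclude, or otherwise handle, the degenerate locus where $(C_1(u),C_2(u))=(0,0)$, on which $F$ vanishes identically along that ruling --- the $S^3$-analogue of a Euclidean cylinder, which would break uniqueness. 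Second --- and this is the delicate step --- one has to show that, on the simply connected model, the branches $v=\tfrac{1}{\sqrt{4k}}\bigl(\phi(u)+\tfrac{\pi}{2}+n\pi\bigr)$, $n\in\mathbb{Z}$, obtained from $F=0$ reduce, under the identification $v\sim v+2\pi/\sqrt{k}$ along the rulings, to a single striction curve, namely \eqref{strictionsphere}. Once the degenerate locus is disposed of, $F(u,\cdot)$ is a nonzero sinusoid for every $u$, all of its zeros are simple (a common zero of $F$ and $\partial F/\partial v$ would force $(C_1,C_2)=(0,0)$), and the implicit function theorem supplies smooth local branches; the global statement then follows from the structure of the zero set. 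By contrast, the substitution-and-solve steps and the whole $k\le 0$ discussion are routine, so essentially all the difficulty is concentrated in this last step for the sphere.
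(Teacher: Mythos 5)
Your reduction is exactly the paper's: by Proposition \ref{criticalJac} a curve $s(u)=\mathbf{X}(u,v(u))$ is a striction curve if and only if $F(u,v(u))=0$, and the three displayed formulas are obtained by solving the closed forms \eqref{Jacobifuncsphere}--\eqref{Jacobifunchyperbolic} with the constants \eqref{C1}, \eqref{Cspherehyperbolic}, \eqref{Ceuclid}. Your handling of $k=0$ and $k<0$ is complete and coincides with the paper's (uniqueness from the affine form of $F(u,\cdot)$, respectively from the injectivity of $\tanh$), and your added remarks --- that the Sannia hypothesis forces $\nabla_{\alpha'}Z\neq0$, hence $C_2>0$, and that for $k<0$ a striction curve exists precisely when $|C_1|<C_2$ --- are correct refinements, consistent with Proposition \ref{ship}.

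The one step you do not close is the uniqueness claim for $k>0$, and you say so yourself: ``the global statement then follows from the structure of the zero set'' is not an argument, and the identification you invoke, $v\sim v+2\pi/\sqrt{k}$, does not identify the zero branches of the sinusoid $F(u,\cdot)$, which recur with spacing $\pi/(2\sqrt{k})$, giving four central points on each closed ruling (two minima and two maxima of $\Vert\mathbf{X}_u\Vert^2$ in the nondegenerate case). So, measured against the statement, your proposal proves the formulas and the $k\le 0$ assertions but leaves the spherical uniqueness open. You should know, however, that the paper's own proof of that point is a single sentence asserting that $v(u)$ is periodic with period $2\pi/\sqrt{k}$, ``exactly the period of the geodesics in $S^3$,'' whence geometric uniqueness; this glosses over precisely the branch-counting issue you raise. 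Likewise, the degenerate locus you flag, $(C_1,C_2)=(0,0)$, on which $F$ vanishes identically along the ruling, is not excluded by the Sannia condition (it is realized, for instance, by the Clifford torus in $S^3$ ruled by one of its families of great circles, the spherical analogue of the cylinder) and is not addressed in the paper either. In short: same approach throughout, complete agreement for the formulas and for $k\le0$, and an acknowledged gap at $k>0$ which points at a genuine subtlety that the paper's one-line periodicity argument does not really resolve.
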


\begin{proof}
	As already noted in Remark \ref{Fstrictioncondition}, a necessary condition for a point $p$ to lie on the striction curve is that the Jacobi evolution function verifies $F(p)=0$. Then the expressions for $v(u)$ are directly obtained from \eqref{Jacobifuncsphere}, \eqref{Jacobifunceuclid} and \eqref{Jacobifunchyperbolic} respectively, taking into account \eqref{Cspherehyperbolic} or \eqref{Ceuclid}.
    For $k>0$, the expression must be understood to provide $v(u)=\pm\pi /2$ if $-k\Vert\alpha^\prime\Vert_p^2\sin^2{\sigma_p}+\Vert\nabla_{\alpha^\prime} Z\Vert^2_p=0$. In any case, the value of $v(u)$ is periodic with period $2\pi/\sqrt{k}$, which is exactly the period of the geodesics in $S^3$, so that the striction curve is geometrically unique. The uniqueness holds trivially true for $k=0$. For $k<0$, the injectivity of the function $\mathrm{arctanh}$ implies that there is at most a solution for $v(u)$.
\end{proof}

\begin{remark}
	Formula \eqref{strictioneuclid} is the classical formula for the striction curve in the Eucliden space. Formulas (\ref{strictionsphere}) in $S^3$ and (\ref{strictionhyperbolic}) in $\mathbb{H}^3$ were given in \cite{DaSilvaDaSilva2022}, where the uniqueness of striction curves is also addressed. However, they obtain the results by working with the sphere or the hyperbolic space as submanifolds of $\mathbb{R}^4$ with the standard Euclidean or Lorentzian metric respectively, that is, in a less intrinsic fashion. Our approach involves a differential equation that, in principle, could be analysed in arbitrary Riemannian manifolds. Note that in such a case, the equation will involve the curvature of the ambient space. Finally, the authors also claim in \cite{DaSilvaDaSilva2022} that the striction curve always exists for $k<0$, a fact that is wrong as the following example illustrates.
\end{remark}

 \textit{Example 1: A ruled surface in the hyperbolic space without striction curve.} We consider in $(\mathbb{H}^3,g=\frac{1}{z}(dx^2+dy^2+dz^2))$ the ruled surface with base curve $\alpha\colon [0,2\pi)\rightarrow (\mathbb{H}^3,g)$ determined by $\alpha(u)=(\cos{u},\sin{u},1)$ and the unit ruling vector $Z(u)=(\cos{u},\sin{u},0)$ defined along $\alpha$. A straightforward computation shows that
	\begin{equation*}
		\nabla_{\alpha^\prime}Z=-\sin{u}\,\partial_x+\cos{u}\,\partial_y.
	\end{equation*}
	The form for the Jacobi evolution function $F$ is described by relation (\ref{Jacobifunchyperbolic})
	for negative constant sectional curvature. The coefficients $C_1$ and $C_2$ read this time $C_1=g_p(\alpha^\prime,\nabla_{\alpha^\prime}Z)=1$ and
	$C_2=\frac{1}{2}\left( \Vert\alpha^\prime\Vert_p^2\sin^2{\sigma_p}+\Vert\nabla_{\alpha^\prime}Z\Vert_p^2\right)=1$, since $\sigma_p=\pi/2$.
	Therefore, the Jacobi evolution function associated with such a ruled surface is
	\begin{equation*}
		F(u,v)=\cosh{2v}+\sinh{2v}.
	\end{equation*}
	Condition $F=0$ holds on the striction line in case of existence, which is equivalent to
	\begin{equation*}
		1+\tanh{2v}=0,
	\end{equation*}
	which clearly does not have any solution. \\

\vspace{3mm}
Actually, we can characterize when a (complete) Sannia ruled surface in $\mathbb{H}^3(k)$ never admits a striction curve in terms of its extrinsic curvature and first Sannia invariant. For that, first note that in a ruled surfaces defined by an arc-parametrized curve $\alpha$ and a unitary vector field $Z(u)=\pm \alpha '(u)$ (that is, tangent ruled surfaces), the curve $\alpha$ is already a striction curve. On the other hand, if $\alpha '(u)$ and $Z(u)$ are linearly independent, the curve $\bar{\alpha}(u)=\mathbf{X}(u,f(u))$ where $f$ satisfies the ODE $f'(u)=-g(\mathbf{X}_u,\mathbf{X}_v)_{\mathbf{X}(u,f(u))}$, defines a new parametrization of the same surface such that $\bar{\alpha}'(u)$ and $\bar{Z}(u)=\mathbf{X}_v(u,f(u))$ are always orthogonal.

 \begin{proposition}\label{ship}
	Let $\mathbf{X}(u,v)=\exp _{\alpha (u)}(vZ(u))$ a Sannia ruled surface in $\mathbb{H}^{3}(k)$
	defined by an arc-parametrized curve $\alpha $ and a unitary ruling vector
	field $Z\in \mathfrak{X}(\alpha )$ such that $g(\alpha ^{\prime },Z)=0$.
	Then the surface does not admit a striction curve if and only if its
	extrinsic curvature $K_{ext}$ vanishes and the first Sannia curvature satisfies $\kappa _{1}=\sqrt{-k}$.
\end{proposition}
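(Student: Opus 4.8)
The plan is to feed the closed form of the Jacobi evolution function in a negatively curved space form into Proposition~\ref{criticalJac}. By \eqref{Jacobifunchyperbolic}, along the generator through $\alpha(u)$ one has
\[
F(u,v)=C_1(u)\cosh\big(\sqrt{-4k}\,v\big)+C_2(u)\sinh\big(\sqrt{-4k}\,v\big),
\]
with $C_1,C_2$ given by \eqref{C1} and \eqref{Cspherehyperbolic}. Under the standing hypotheses the base angle is $\sigma_p=\pi/2$, and since $\kappa_0=\Vert Z\Vert=1$ forces $Z=X_1$, formula~\eqref{Sannia1} gives $\nabla_{\alpha'}Z=\kappa_1X_2$; hence $\Vert\nabla_{\alpha'}Z\Vert=\kappa_1$ and, by \eqref{basecurvedecomposition}, $C_1=g(\nabla_{\alpha'}Z,\alpha')=\kappa_1\sin\varphi$, while $C_2=\tfrac{1}{2\sqrt{-k}}(-k+\kappa_1^2)$. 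Two elementary observations drive the proof. First, $C_2>0$ because $-k>0$; hence, by bijectivity of $\tanh\colon\mathbb{R}\to(-1,1)$, the equation $F(u,v)=0$ has a (unique, and smoothly $u$-dependent) solution $v(u)=\tfrac{1}{\sqrt{-4k}}\,\mathrm{arctanh}(-C_1/C_2)$ if and only if $C_1(u)^2<C_2(u)^2$. By Proposition~\ref{criticalJac} a striction curve is precisely such a choice of $v(u)$, so the surface carries no central point on the generator through $\alpha(u)$ exactly when $C_1(u)^2=C_2(u)^2$ (the strict inequality $C_1^2>C_2^2$ being impossible by the next step).

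Second, I would check the algebraic identity
\[
C_2^2-C_1^2=\frac{1}{-4k}\Big[(\kappa_1^2+k)^2+(-4k)\,\kappa_1^2\cos^2\varphi\Big],
\]
obtained by expanding the squares and replacing $\sin^2\varphi$ by $1-\cos^2\varphi$. Since $k<0$ and $\kappa_1>0$, the two bracketed terms and the prefactor are all nonnegative, so $C_2^2\geq C_1^2$ always, with equality if and only if $\kappa_1^2+k=0$ and $\cos\varphi=0$, that is $\kappa_1=\sqrt{-k}$ and $\cos\varphi=0$. Combining with the first step: the surface admits no central point on any generator (equivalently, no striction curve) if and only if $\kappa_1\equiv\sqrt{-k}$ and $\cos\varphi\equiv0$ on $I$.

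It then remains to match the condition $\cos\varphi\equiv0$ with the vanishing of $K_{ext}$. Evaluating \eqref{curvatures} at $v=0$, where $\xu=\alpha'$, $\xv=Z=X_1$, $\nabla_{\xu}\xv=\kappa_1X_2$, $\Vert\alpha'\Vert=\Vert Z\Vert=1$ and $g(\alpha',Z)=\cos\varphi\cos\theta=0$, so that $\alpha'=\sin\varphi\,X_2+\cos\varphi\sin\theta\,X_3$ by \eqref{basecurvedecomposition}, one gets $\mathrm{vol}_g(\alpha',X_1,\kappa_1X_2)=\kappa_1\cos\varphi\sin\theta$ and hence
\[
K_{ext}\big|_{\alpha(u)}=-\kappa_1^2\cos^2\varphi\sin^2\theta=-\kappa_1^2\cos^2\varphi ,
\]
the last equality because $\cos\varphi\neq0$ would force $\cos\theta=0$, i.e. $\sin^2\theta=1$. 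Thus the vanishing of $K_{ext}$ along the base already forces $\cos\varphi\equiv0$. For the converse, if $\cos\varphi\equiv0$ then $\alpha'=\pm X_2$ is a unit field orthogonal to each ruling $\gamma_u(v)=\exp_{\alpha(u)}(vX_1(u))$, and the Jacobi field $\xu$ along $\gamma_u$ has initial data $\xu(u,0)=\pm X_2(u)$ and $(\nabla_{\xv}\xu)(u,0)=(\nabla_{\xu}\xv)(u,0)=\nabla_{\alpha'}Z=\kappa_1X_2$, both collinear with $X_2(u)$. In a constant-curvature ambient this forces $\xu(u,v)=\rho(u,v)\,W_u(v)$, with $W_u$ the parallel transport of $X_2(u)$ along $\gamma_u$ and $\rho$ solving the scalar equation $\rho''+k\rho=0$ with the initial conditions read off above; then $\nabla_{\xu}\xv=\nabla_{\xv}\xu=\partial_v\rho\,W_u$ is collinear with $\xu$, so $\mathrm{vol}_g(\xu,\xv,\nabla_{\xu}\xv)\equiv0$ and $K_{ext}\equiv0$ by \eqref{curvatures}. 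This closes the equivalence.

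The step I expect to be the main obstacle is precisely this last one: upgrading ``$\cos\varphi\equiv0$'' to ``$K_{ext}\equiv0$ on the entire surface'' rather than only along the base curve. It requires controlling the Jacobi field $\xu$ along each whole ruling, and it is exactly the constant-curvature hypothesis that pins $\xu$ to the one-dimensional parallel line field $\mathbb{R}\,W_u(v)$ and renders $\nabla_{\xu}\xv$ collinear with $\xu$; one must carry this through carefully with the sign convention for the curvature operator used in \eqref{Jacobeq}. The remaining parts are the bookkeeping above together with the already-established closed form for $F$.
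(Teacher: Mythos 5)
Your proof is correct and follows essentially the same route as the paper: nonexistence of the striction curve is reduced to the unsolvability of $F(u,\cdot)=0$ via \eqref{Jacobifunchyperbolic}, your identity $C_2^2-C_1^2=\tfrac{1}{-4k}\bigl[(\kappa_1^2+k)^2-4k\,\kappa_1^2\cos^2\varphi\bigr]$ is exactly the estimate the paper derives from the arithmetic--geometric mean inequality (note $\cos\psi=\sin\varphi$ in the paper's notation), and the identification of the equality case with $\kappa_1=\sqrt{-k}$ together with the vanishing of $\mathrm{vol}_g(\alpha',Z,\nabla_{\alpha'}Z)$, i.e.\ of $K_{ext}$, matches the paper's final step. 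The only difference is your extra verification, via the constant-curvature Jacobi field argument along each ruling, that $\cos\varphi\equiv 0$ forces $K_{ext}\equiv 0$ on the whole surface and not merely along the base curve --- a point the paper leaves implicit --- so this is a welcome refinement of the same argument rather than a different approach.
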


\begin{proof}
	According to \eqref{strictionhyperbolic}, the inexistence of the striction curve is equivalent to the condition%
	\begin{equation*}
		\frac{2\sqrt{-k}\left\vert g(\alpha ^{\prime },\nabla _{\alpha ^{\prime
			}}Z)\right\vert }{-k+\left\Vert \nabla _{\alpha ^{\prime }}Z\right\Vert ^{2}}%
		=\frac{2\sqrt{-k}\left\Vert \nabla _{\alpha ^{\prime }}Z\right\Vert
			\left\vert \cos \psi \right\vert }{-k+\left\Vert \nabla _{\alpha ^{\prime
			}}Z\right\Vert ^{2}}\geq 1,  \label{2}
	\end{equation*}
	where $\psi =\widehat{\alpha ^{\prime },\nabla _{\alpha ^{\prime }}Z}$. On
	the other hand, we always have
	\[
	\frac{2\sqrt{-k}\left\Vert \nabla _{\alpha ^{\prime }}Z\right\Vert
		\left\vert \cos \psi \right\vert }{-k+\left\Vert \nabla _{\alpha ^{\prime
		}}Z\right\Vert ^{2}}\leq \frac{2\sqrt{-k}\left\Vert \nabla _{\alpha ^{\prime
		}}Z\right\Vert }{-k+\left\Vert \nabla _{\alpha ^{\prime }}Z\right\Vert ^{2}}%
	\leq 1 ,
	\]%
	where last step is the arithmetic-geometric mean inequality. Then, we have
	that expression (2) is never bigger than 1 and it is 1 if and only if $\cos
	\psi =\pm 1$ and $\left\Vert \nabla _{\alpha ^{\prime }}Z\right\Vert =\sqrt{-k}$.
	The proof is now complete by taking into account that, on one hand $Z=X_{1}$
	and $\kappa _{1}=\left\Vert \nabla _{\alpha ^{\prime }}X_{1}\right\Vert $,
	and on the other hand noting that $vol(\alpha ^{\prime },Z,\nabla _{\alpha
		^{\prime }}Z)$ vanishes (which is equivalent to the vanishing of $K_{ext}$)
	if and only if $\psi =0$ or $\pi $ (that is, $\alpha ^{^{\prime }}$ is
	parallel to $\nabla _{\alpha ^{\prime }}Z$), since $\alpha ^{\prime }\perp Z$
	, and $Z\perp \nabla _{\alpha ^{\prime }}Z$.
\end{proof}

We end the article in ambient manifolds of non-constant curvature, where we see that the uniqueness of the striction curve cannot be guaranteed.

 \vspace{4mm}

  \textit{Example 2: A ruled surface in a product manifold equipped with different striction curves.} We consider the surface of revolution $\psi\colon\Sigma\rightarrow (\mathbb{R}^3,\delta)$ in $(\mathbb{R}^3,\delta)$ generated by the rotation of the curve $f(x)=2+\sin{x}$ in the $xy$-plane about
	the $x$-axis. Such a surface is defined by the following parametrization:
	\begin{eqnarray}
				&&\mathbf{X}\colon [0,2\pi)\times \mathbb{R}\rightarrow (\mathbb{R}^3,\delta) \nonumber \\
&& \mathbf{X}(u,v)=(v, (2+\sin{v})\cos{u}, (2+\sin{v})\sin{u}) . \label{paramseveralstriction}
	\end{eqnarray}
	The expression for the induced metric $g_\Sigma$ on $\Sigma$ reads
	\begin{equation*}
		g_\Sigma=(2+\sin{v})^2 du^2+(1+\cos^2{v})dv^2
	\end{equation*}
	in local coordinates associated with the parametrization (\ref{paramseveralstriction}). It is a well-known fact that
	the generating curve of a surface of revolution in $(\mathbb{R}^3,\delta)$ is a geodesic of the surface.
	Consider the product manifold $\mathbb{R}\times \Sigma$ endowed with the product metric $g=dt^2+g_\Sigma$. Since
	$\Sigma\subset (\mathbb{R}\times\Sigma,g)$ is a totally geodesic slice of this Riemannian product, its Gauss
	curvature is zero and the generating geodesic curves of $(\Sigma,g_\Sigma)$ are also geodesics in the ambient
	product manifold. Let us choose $\alpha(u)=\mathbf{X}(u,0)$ as base curve
	of $(\Sigma,g_\Sigma)$.
	When we consider the embedded ruled surface $\psi\colon \Sigma\rightarrow (\mathbb{R}\times\Sigma,g)$,
	$\alpha^\prime(u)=\xu |_\alpha$ is the initial $v=0$ value  of the Jacobi vector field $\xu$ along the rulings, and $Z=(1/\sqrt{1+\cos^2{v}})\xv$ is a unit geodesic vector tangent to them. Let us compute the Jacobi evolution function $F$  in this context. A straightforward calculation gives
	\begin{equation*}
		\nabla_{\alpha^\prime}Z=\frac{\cos{v}}{(1+\cos^2{v})(2+\sin{v})}\xu.
	\end{equation*}
	As a consequence
	\begin{equation*}
		F(u,v)=g(\xu,\nabla_{\xu}Z)=\frac{\cos{v}(2+\sin{v})}{1+\cos^2{v}}.
	\end{equation*}
	The striction curves are given by the solution of $F=0$, and this happens if and only if $v=\pi/2+k\pi$, where $k\in \mathbb{Z}$. This means that each curve $\alpha_{v_k}(u)=(u,\pi/2+k\pi)$ with $k\in \mathbb{Z}$ is a striction curve
	of $\psi\colon \Sigma\rightarrow (\mathbb{R}\times\Sigma,g)$.  \\

  \textit{Example 3: A ruled surface in a warped product manifold with an arbitrary number of striction curves.} Given an open interval  $I\subset \mathbb{R}$ and a $2$-dimensional Riemannian manifold $(F,g_F)$, consider the product $3$-manifold $I\times F$ endowed with the metric $g=dt^2+f^2(t)g_F$, where $f\colon I\rightarrow \mathbb{R}$ is a smooth positive function. We will refer to the warped product manifold $(I\times F,g)$ as $I\times_f F$. Consider a closed unit curve $\alpha^F\colon[a,b)\rightarrow (F,g_F)$ in the fiber of $I\times_f F$.
	Given any $v\in I$, $\alpha^F$ can be lifted in a natural way to the slice $\{t=v\}$ as the curve
	$\alpha_v\colon [a,b)\rightarrow I\times_f F$ defined by $\alpha_v(u)=(v,\alpha^F(u))$, with  $Z=\partial_t$ as ruling unit vector field along $\alpha_v$. As usual, the initial value of the Jacobi field $\xu$ on $\alpha_v$ is $\xu|_{\alpha_v}=\alpha^\prime_v$. Since $Z=\partial_t|_{\alpha_v}$ is orthogonal to the base curve, $\xu$ will remain orthogonal to the ruling by virtue of Proposition \ref{Propcoordframeangle}. Since $\xu$ is tangent to the fiber in $I\times_f F$ and $\xv=\partial_t$ is orthogonal to it. It holds
	\begin{equation*}
		\nabla_{\xu} \xv=(\partial_t\log f) \xu,
	\end{equation*}
	so the corresponding Jacobi evolution function reads
	\begin{equation*}
		F(u,v)=g(\xu,\nabla_{\xu} \xv)=(\partial_t\log f)\Vert\xu\Vert^2.
	\end{equation*}
	The solution to the equation $F=0$ which determines the striction curves is given in this case by $f'(v)=0$.
	Hence, there will be as many striction curves as there are values at which the function $f'$ vanishes in $I$. For instance, if we consider $I=\mathbb{R}$ and $(F,g_F)$ isometric to the two-dimensional Euclidean space $(\mathbb{R}^2,\delta)$, for  the choice $f(t)=\sin{t}$, every curve of the form $\alpha_{v_k}(u)=(\pi/2+k\pi,\cos{u},\sin{u})$ with $k\in \mathbb{Z}$ is a striction line in the ruled surface
	determined by it and $\xv=\partial_t$.
	
\vspace{4mm}

\noindent {\bf Acknowledgements}

\noindent The authors are indebted with prof. L.C.B da Silva for his careful reading of the manuscript and for his useful remarks, in particular, about Proposition \ref{ship} above.

\end{document}